\setlist[itemize]{leftmargin=25pt}
\setlist[enumerate]{leftmargin=25pt}
\newtheorem{theorem}{Theorem}[section]
\newtheorem{prop}[theorem]{Proposition}
\newtheorem{cor}[theorem]{Corollary}
\theoremstyle{definition}
\newtheorem{definition}[theorem]{Definition}
\theoremstyle{remark}
\newtheorem{remark}[theorem]{Remark}
\newtheorem{example}[theorem]{Example}
\numberwithin{equation}{section}
\DeclareMathOperator*{\esssup}{ess\,sup}
\DeclareMathOperator{\osc}{osc}
\DeclareMathOperator{\loc}{loc}
\DeclareMathOperator{\UMD}{UMD}
\DeclareMathOperator{\mart}{mart}
\newcommand{\N}{\ensuremath{\mathbb{N}}}
\newcommand{\R}{\ensuremath{\mathbb{R}}}
\renewcommand{\P}{\ensuremath{\mathbb{P}}}
\newcommand{\mc}{\mathcal}
\newcommand{\ms}{\mathscr}
\newcommand{\mbs}{\boldsymbol}
\DeclarePairedDelimiter\abs{\lvert}{\rvert}
\DeclarePairedDelimiter\cbrace\{\}
\DeclarePairedDelimiter\ha()
\DeclarePairedDelimiter{\ip}\langle\rangle
\DeclarePairedDelimiter{\nrm}\lVert\rVert
\newcommand{\nrmb}[1]{\bigl\|#1\bigr\|}
\newcommand{\absb}[1]{\bigl|#1\bigr|}
\newcommand{\hab}[1]{\bigl(#1\bigr)}
\newcommand{\cbraceb}[1]{\bigl\{#1\bigr\}}
\newcommand{\ipb}[1]{\bigl\langle#1\bigr\rangle}
\newcommand{\nrms}[1]{\Bigl\|#1\Bigr\|}
\newcommand{\has}[1]{\Bigl(#1\Bigr)}
\newcommand{\dd}{\hspace{2pt}\mathrm{d}}
\newcommand{\ddn}{\mathrm{d}}
\let \la=\lambda
\let \e=\varepsilon
\let \d=\delta
\let \o=\omega
\let \a=\alpha
\let \O=\Omega
\let \G=\Gamma
\let \ga=\gamma
\begin{document}

\title[Operator-free sparse domination]
{Operator-free sparse domination}

\author[A.K. Lerner]{Andrei K. Lerner}
\address[A.K. Lerner]{Department of Mathematics,
Bar-Ilan University, 5290002 Ramat Gan, Israel}
\email{lernera@math.biu.ac.il}

\author[E. Lorist]{Emiel Lorist}
\address[E. Lorist]{Department of Mathematics and Statistics\\ University of Helsinki \\ P.O. Box 68\\
FI-00014 Helsinki\\ Finland}
\email{emiellorist@gmail.com}

\author[S. Ombrosi]{Sheldy Ombrosi}
\address[S. Ombrosi]{Departamento de Matem\'atica\\
Universidad Nacional del Sur\\
Bah\'ia Blanca, 8000, Argentina}\email{sombrosi@uns.edu.ar}

\thanks{The second author was supported by the Academy of Finland through grant no. 336323. The third author was partially supported
by ANPCyT PICT 2018-2501.}

\begin{abstract}
We obtain a sparse domination principle for an arbitrary family of functions $f(x,Q)$, where $x\in {\mathbb R}^n$ and $Q$ is a cube in~${\mathbb R}^n$.
When applied to operators, this result recovers our recent works \cite{LO19,Lo19b}. On the other hand, our sparse domination principle can be also applied to non-operator objects.
In particular, we show applications to generalized Poincar\'e--Sobolev inequalities, tent spaces, and general dyadic sums.
Moreover, the flexibility of our result allows us to treat operators that are not localizable in the sense of  \cite{Lo19b},
as we will demonstrate in an application to vector-valued square functions.
\end{abstract}

\keywords{Sparse domination, Poincar\'e--Sobolev inequalities, tent spaces, square functions, dyadic sums.}

\subjclass[2020]{42B20, 42B25}

\maketitle

\section{Introduction}
Sparse domination is a recent technique allowing one to estimate (in norm, pointwise or dually) many operators in harmonic analysis by simple expressions of the form
$$\sum_{Q\in {\mathcal S}}\langle f\rangle_{p,Q}\chi_Q,$$
where $\langle f\rangle_{p,Q}=\big(\frac{1}{|Q|}\int_Q|f|^p\big)^{1/p}$ for $p \in (0,\infty)$ and ${\mathcal S}$ is a sparse family of cubes in ${\mathbb R}^n$.

Primarily motivated by sharp quantitative weighted norm inequalities, sparse domination has quickly transformed into a very active area, dealing with
various operators within and beyond the Calder\'on--Zygmund theory. During the last five years a number of sparse domination principles (that is, general results establishing
sparse domination for a given class of operators) have appeared e.g. in the works \cite{BBR20, BRS20,BM21, BFP16,CCDO17,CDPV20, Le16, Le19, LO19,Lo19b}.

Let us consider a particular line of research in this direction, for which the starting point is the so-called local mean oscillation
estimate (see \cite{Hy14b,Le10})
\begin{equation}\label{lmo}
|f-m_f(Q)|\chi_Q\lesssim \sum_{P\in {\mathcal S}}\o_{\la}(f;P)\chi_P,
\end{equation}
where $f$ is an arbitrary measurable function and $m_f(Q)$ and $\o_{\la}(f;Q)$ denote a median value and the local mean oscillation of $f$ on the cube~$Q$, respectively.

The local mean oscillation estimate can be regarded as the first operator-free sparse domination result, but its main application was to operators. Specifically,
this estimate was applied by the first author in~\cite{Le13a} to a Calder\'on--Zygmund operator $T$, using $Tf$ instead of~$f$ in \eqref{lmo}. This provided
norm sparse domination for $T$ and, as a result, an alternative proof of the $A_2$-theorem, which was  first proven by Hyt\"onen~\cite{Hy12}. Later, this norm sparse domination result was improved to pointwise
sparse domination simultaneously by Conde-Alonso--Rey and the first author and Nazarov in \cite{CR16,LN15}.

The methods in \cite{CR16,LN15} still depended on (\ref{lmo}). The drawback of this approach is that it necessitates estimating local mean oscillations of~$T$, although
$T$ is not a well-localized operator. For this reason, the results in \cite{CR16,LN15} hold under the $\log$-Dini assumption on the kernel of the Calder\'on--Zygmund operator $T$.

The next step was taken by Lacey in \cite{La17b}, where pointwise sparse domination for $T$ was obtained under the usual Dini assumption on the kernel of $T$.
The main new realisation in \cite{La17b} was that it suffices to estimate suitable truncations of $T$, which can be done without the use of (\ref{lmo}).
The proof of the pointwise sparse domination result for $T$ was subsequently simplified by the first author in \cite{Le16} and the first and third authors in \cite{LO19}, in which a general sparse domination principle was
established, allowing one to deal with a vast number of ``smooth'' operators. The main result of \cite{LO19} was then extended by the second author \cite{Lo19b} into several directions, including the setting of vector-valued functions on spaces of homogeneous type,
along with the concept of $\ell^r$-sparse domination.

The development we have so far described can be summarised in the following diagram:
$$
(\ref{lmo})\rightarrow\text{\cite{Le13a}}\rightarrow\text{\cite{CR16,LN15}}\rightarrow\text{\cite{La17b}}\rightarrow\text{\cite{Le16}}\rightarrow\text{\cite{LO19}}\rightarrow\text{\cite{Lo19b}}.
$$
Starting from \cite{La17b}, the local mean oscillation estimate (\ref{lmo}) has not played a role in the obtained sparse domination results. Therefore, this development can be viewed as an evolution
from sparse domination for arbitrary functions (expressed in (\ref{lmo})) to sparse domination for operators.

In the present article,  we return sparse domination to its roots, using functions rather than operators.  We will essentially use the techniques developed in \cite{LO19,Lo19b}. Our key novel point is the language in which our main results are written.
This language unifies (\ref{lmo}) with all the results containing in \cite{LO19,Lo19b}. More important, it allows us to deal with many non-operator objects, which have not yet been investigated using sparse domination techniques. This development can be seen in analogy to the development of Rubio de Francia extrapolation, which was first proven for operators, but was later realised to be much more versatile and applicable in a formulation for functions (see \cite{CMP11} for a discussion on this development).

\bigskip

Let us give a flavour of our language.
Precise definitions and statements will be given in subsequent sections.
We shall deal with functions $f(x,Q)$ of two ``variables'', being the points $x\in {\mathbb R}^n$ and the cubes $Q\subset {\mathbb R}^n$. For brevity we denote such functions by $f_Q(x)$. Observe
that the role of the cubes $Q$ may vary. For example, one can define $f_Q(x):=T(f\chi_{\a Q})(x)$, where $T$ is a given operator. On the other hand, in the theory of tent spaces,
a typical definition will be $f_Q(x):=\int_{\G_{\ell_Q}(x)}f(y,t)\dd\mu(y,t)$, where $\G_{\ell_Q}(x)$ is a cone in~${\mathbb R}^{n+1}_+$, truncated according to the side length of $Q$.

Our main result, Theorem~\ref{ofsdp}, provides pointwise sparse domination for $|f_Q|$ for a fixed cube $Q \subset \R^n$. The dominating sparse object is expressed in terms of $f_P$ for dyadic subcubes $P$ of $Q$ and a certain family of functions $f_{P,Q}$, which connects $f_P$ and $f_Q$ in a natural way. A typical example is $f_{P,Q}:=f_Q-f_P$, but, depending on the context, one can make more clever choices of $f_{P,Q}$.

\bigskip

The article is organised as follows. Section \ref{sec:maindefinitions} contains the main definitions. In particular, our language is introduced there. Section \ref{sec:mainresults} contains our main results, which are pointwise and bilinear form
operator-free sparse domination principles, expressed in Theorems \ref{ofsdp} and \ref{bf}. In Section \ref{sec:oldresults} we show that our new results contain a vast number of
previously known sparse domination results as particular cases.

Sections \ref{sec:poincare}-\ref{sec:dyadic} present new applications. Section \ref{sec:poincare} is devoted to generalized Poincar\'e--Sobolev inequalities. In a recent work on this topic \cite{PR19},
P\'{e}rez and Rela obtained a weighted self-improving result, assuming the $A_{\infty}$-condition on the weight. They asked whether the $A_{\infty}$-condition can be removed.
Using our sparse domination principle, we give an affirmative answer to this question.

In Section \ref{sec:tent}, we give a ``sparse'' proof of the theorem of Coifman--Meyer--Stein \cite{CMS85} on  the main relation between two basic operators in the
theory of tent spaces. In particular, this improves the good-$\lambda$ inequality for these operators established in \cite{CMS85}. We also obtain similar
results for vector-valued tent spaces.

In Section \ref{sec:squarefun} we give a simplified proof of a recent result by Xu \cite{Xu21} about sharp bounds for the vector-valued vertical square function, which was an important ingredient in his answer to a question by Naor and Young \cite{NY20} about sharp bounds for the heat semigroup on~${\mathbb R}^n$.

In Section \ref{sec:dyadic} we obtain a simple sufficient condition allowing one, when dealing with the dyadic sums $\sum\a_Q\chi_Q$, to replace the summation over all dyadic
subcubes of a given cube by the summation over a sparse family.
As an application, we will generalise and provide a new proof of a result by Honz\'ik and Jaye \cite{HJ12}
on a sharp good-$\lambda$ inequality for the nonlinear dyadic potential.

\bigskip

Throughout the article we use the notation $A\lesssim B$ if $A\le CB$ with some independent constant $C$. We write $A\eqsim B$ if $A\lesssim B$ and $B\lesssim A$.

\section{Main definitions}\label{sec:maindefinitions}
\subsection{Dyadic cubes} Denote by ${\mathcal Q}$ the set of all cubes $Q\subset {\mathbb R}^n$ with sides parallel to the axes. Given a cube $Q\in {\mathcal Q}$, denote by
${\mathcal D}(Q)$ the set of all dyadic cubes with respect to $Q$, that is, the cubes obtained by repeated subdivision of $Q$ and each of its descendants into $2^n$ congruent subcubes.

Following \cite[Def. 2.1]{LN15}, a dyadic lattice ${\mathscr D}$ in ${\mathbb R}^n$ is any collection of cubes such that
\begin{enumerate}
\renewcommand{\labelenumi}{(\roman{enumi})}
\item Any child of $Q\in{\mathscr D}$ is in ${\mathscr D}$ as well, i.e. $\mc{D}(Q) \subseteq \ms{D}$.
\item
Any $Q',Q''\in {\mathscr D}$ have a common ancestor, i.e. there exists a $Q\in{\mathscr D}$ such that $Q',Q''\in {\mathcal D}(Q)$.
\item
For every compact set $K\subset {\mathbb R}^n$, there exists a cube $Q\in {\mathscr D}$ containing $K$.
\end{enumerate}
Let $Q\in {\mathcal Q}$. We say that a family of dyadic cubes ${\mathcal F}\subset {\mathcal D}(Q)$ is \emph{contracting} if
${\mathcal F}=\cup_{k=0}^{\infty}{\mathcal F}_k$, where $\mc{F}_0 = \cbrace{Q}$,
 each ${\mathcal F}_k$ is a family of pairwise disjoint cubes, and for $\O_k=\cup_{P\in {\mathcal F}_k}P$ we have
$\O_{k+1}\subset \O_k$ and $|\O_k|\to 0$ as $k\to \infty$.
Given a contracting family ${\mathcal F}\subset {\mathcal D}(Q)$, for $P\in {\mathcal F}_k$ with $k\in \N \cup \cbrace{0},$ we denote
$$E_P:=P\setminus \bigcup_{P'\in {\mathcal F}_{k+1}}P' = P\setminus \Omega_{k+1}.$$
Observe that the sets $\{E_P\}_{P\in {\mathcal F}}$ are pairwise disjoint.

\begin{definition} Let $\eta \in (0,1)$ and $Q \in \mc{Q}$. We say that a family ${\mathcal F}\subset {\mathcal D}(Q)$ is $\eta$-sparse if it is contracting and
$|E_P|\ge \eta|P|$ for all $P\in {\mathcal F}$.
\end{definition}

Note that our definition of a sparse family of cubes is slightly more restrictive than the usual definition in the literature. In particular, we assume a sparse family of cubes to have exactly one maximal cube, the cube $Q$.

\subsection{The \texorpdfstring{$\ell^r$}{lr}-condition} Consider a family of measurable functions $\{f_Q, f_{P,Q}\}\colon{\mathbb R}^n\to {\mathbb R}$, where $Q\in {\mathcal Q}$, $P\in {\mathcal D}(Q)$. We note that our choice of $\mathbb{R}$ as the scalar field is inconsequential, i.e. all subsequent results hold for complex-valued functions as well.

 We introduce  a compatibility condition on such a family of functions, which is implicitly contained in \cite{Lo19b}. We will elaborate on the connection to~\cite{Lo19b} in Subsection \ref{subsection:ellr}.

\begin{definition}\label{r-cond}
Let $r \in (0,\infty)$. We say that the family
$$\{f_Q,f_{P,Q}\}_{Q\in {\mathcal Q}, P\in {\mathcal D}(Q)}$$
satisfies the \emph{$\ell^r$-condition}
if there exists a $C_r>0$ such that for every $Q\in {\mathcal Q}$ and every $P_1,\dots, P_m\in {\mathcal D}(Q)$ with $P_m\subset\dots\subset P_1$, we have for a.e. $x \in P_m$,
$$|f_{P_1}(x)|\le C_r\has{\sum_{k=1}^{m-1}|f_{P_{k+1},P_{k}}(x)|^r+|f_{P_m}(x)|^r}^{1/r}.$$
\end{definition}

Given an arbitrary family of measurable functions $f_Q\colon {\mathbb R}^n\to {\mathbb R}$, a canonical choice for $f_{P,Q}$ is given by
\begin{equation}\label{eq:mainfPQ}
  f_{P,Q}:=f_Q-f_P,
\end{equation}
for which the $\ell^r$-condition holds trivially with $C_r=1$ for $0<r\le 1$.

\subsection{Sharp maximal operators} For a measurable function $f \colon \R^n \to \R$ we define its \emph{standard oscillation} over a cube $Q \in \mc{Q}$ by
$$\osc(f;Q)=\esssup_{x',x''\in Q}|f(x')-f(x'')|.$$
Furthermore, for $q \in (0,\infty)$ we define its \emph{$q$-oscillation} over a cube $Q \in \mc{Q}$ by
$$\osc_q(f;Q)=\has{\frac{1}{\abs{Q}^2}\int_{Q\times Q}|f(x')-f(x'')|^q\dd x'\dd x''}^{1/q}.$$

Using these oscillations, we can now define sharp maximal operators associated to a family $\{f_{P,Q}\}$, of which precursors can be found in \cite{LO19, Lo19b}.
\begin{definition}\label{sharp} Given a family $\{f_{P,Q}\}_{Q\in {\mathcal Q}, P\in {\mathcal D}(Q)}$,
define the \emph{sharp maximal function} $m_Q^{\#}f$ for $Q \in \mc{Q}$ by
$$m_Q^{\#}f(x)=\sup_{P\in {\mathcal D}(Q):x \in P}\osc(f_{P,Q};P), \qquad x \in Q,$$
and for $q \in (0,\infty)$ define the \emph{sharp $q$-maximal function} by
$$m_{Q,q}^{\#}f(x)=\sup_{P\in {\mathcal D}(Q):x \in P}\osc_q(f_{P,Q};P)), \qquad x \in Q.$$
\end{definition}

\subsection{Nonincreasing rearrangements} The non-increasing rearrangement of a measurable function $f \colon \R^n \to\R$ is defined by
$$f^*(t)=\inf\big\{\a>0:|\{x\in {\mathbb R}^n:|f(x)|>\a\}|\le t\big\},\qquad t \in \R_+.$$
Observe that $(|f|^{\d})^*(t)=f^*(t)^{\d}$ for every $\d>0$. This, along with Chebyshev's inequality $f^*(t)\le \frac{1}{t}\|f\|_{L^1(\R^n)}$, implies that
\begin{equation}\label{rearch}
f^*(t)\le \frac{1}{t^{\frac1\d}}\|f\|_{L^{\d}(\R^n)}.
\end{equation}
We also have
\begin{equation}\label{rear}
\absb{\{x\in {\mathbb R}^n:|f(x)|>f^*(t)\}}\le t,
\end{equation}
which is a consequence of the fact that the distribution function is right-continuous.

%
%
%

\section{Main results}\label{sec:mainresults}
\subsection{A toy domination principle} We start our analysis by observing that the $\ell^r$-condition allows us to bound $f_Q$ for every $Q \in \mc{Q}$ by a sum over an arbitrary contracting family of cubes. In particular we note that we do not need
a sparseness assumption in the following statement.

\begin{prop}\label{tsd}
Let $r\in (0,\infty)$ and let $\{f_Q,f_{P,Q}\}_{Q \in \mc{Q}, P \in \mc{D}(Q)}$ satisfy the $\ell^r$-condition. Let $Q\in {\mathcal Q}$ and
let ${\mathcal F}\subset {\mathcal D}(Q)$ be a contracting family of cubes. Then for a.e. $x\in Q$,
$$|f_Q(x)|^r\le C_r^r\sum_{k=0}^{\infty}\sum_{P\in {\mathcal F}_k}\Big(|f_P(x)|^r\chi_{E_P}(x)
+\sum_{P'\in {\mathcal F}_{k+1}:P'\subset P}|f_{P',P}(x)|^r\chi_{P'}(x)\Big).
$$
\end{prop}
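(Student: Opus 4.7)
The plan is to apply the $\ell^r$-condition along a single chain of cubes of $\mathcal F$ that contains $x$, and then to recognize each of the resulting terms as appearing in the sum on the right-hand side.

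First reduce to a.e.\ $x$. Since $|\Omega_k|\to 0$, the set $N:=\bigcap_{k\ge 0}\Omega_k$ has measure zero, and it suffices to prove the bound for $x\in Q\setminus N$. For such $x$, let
$$m=m(x):=\max\{k\ge 0:x\in \Omega_k\},$$
which is finite and well defined because $x\in \Omega_0=Q$. For each $k=0,1,\dots,m$, the pairwise disjointness of $\mathcal F_k$ singles out a unique cube $R_k\in \mathcal F_k$ with $x\in R_k$. The inclusion $R_{k+1}\subset \Omega_{k+1}\subset \Omega_k=\bigsqcup_{P\in \mathcal F_k}P$, together with the dyadic trichotomy for cubes in $\mathcal D(Q)$, forces $R_{k+1}\subset R_k$ (this is the natural interpretation of ``contracting'': $\mathcal F_{k+1}$ refines $\mathcal F_k$). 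Setting $P_k:=R_{k-1}$ for $k=1,\dots,m+1$, we have a chain
$$Q=P_1\supset P_2\supset \cdots\supset P_{m+1},$$
with $x\in P_{m+1}$, and by maximality of $m$, $x\in P_{m+1}\setminus \Omega_{m+1}=E_{P_{m+1}}$.

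Now apply the $\ell^r$-condition to this chain. Raising both sides to the $r$-th power and noting $f_{P_1}=f_Q$, we obtain
$$|f_Q(x)|^r\le C_r^r\Big(\sum_{k=1}^{m}|f_{P_{k+1},P_k}(x)|^r+|f_{P_{m+1}}(x)|^r\Big).$$
It remains to match each term with one on the right-hand side of the claim. Since $x\in E_{P_{m+1}}$, the last term equals $|f_{P_{m+1}}(x)|^r\chi_{E_{P_{m+1}}}(x)$, which is exactly the contribution of the outer index $k=m$ and the inner cube $P=P_{m+1}\in \mathcal F_m$ in the first piece of the claimed sum; all other summands in that piece vanish at $x$, because for $k\neq m$ the point $x$ either lies outside $P$ or lies in $\Omega_{k+1}$ (and so outside $E_P$). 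Similarly, each mixed term $|f_{P_{k+1},P_k}(x)|^r$ equals $|f_{P_{k+1},P_k}(x)|^r\chi_{P_{k+1}}(x)$ since $x\in P_{k+1}$, and it appears as the contribution of the outer index $k-1$ with $P=P_k\in \mathcal F_{k-1}$ and $P'=P_{k+1}\in \mathcal F_k$ in the second piece of the claimed sum. Discarding the remaining nonnegative terms yields the inequality.

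The one step I expect to require most care is the construction of the nested chain $P_1\supset\cdots\supset P_{m+1}$: one has to justify that the unique cubes of $\mathcal F_k$ through $x$ are automatically nested, which rests on the contracting hypothesis combined with dyadic trichotomy. Everything else is bookkeeping: choosing the maximal level $m(x)$ so that the tail term $|f_{P_{m+1}}(x)|^r$ lands precisely in $E_{P_{m+1}}$, and recognizing each oscillation term $|f_{P_{k+1},P_k}(x)|^r$ as the term indexed by the consecutive pair $(P_k,P_{k+1})$ in the double sum.
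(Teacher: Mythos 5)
Your proof is correct and follows essentially the same route as the paper's: select the maximal nested chain of cubes of $\mathcal F$ through $x$ (using that $|\Omega_k|\to 0$ to make the chain finite a.e.), apply the $\ell^r$-condition along it, and identify each resulting term with a summand of the claimed double sum. Your justification of the nesting of the chain via dyadic trichotomy, and your choice to discard the remaining nonnegative terms rather than (as the paper does) add zero terms to reach equality, are only cosmetic differences.
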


\begin{proof}
Since $|\O_k|\to 0$, there is a set $N\subset Q$ of measure $0$ such that, for every $x\in Q\setminus N$, there are only finitely many $k\in {\mathbb N}$ with $x\in \O_k$.

Fix $x\in Q\setminus N$. Then there exist $P_k\in {\mathcal F}_k$ for $k=0,\dots, m$ such that
$$x\in P_{m}\subset P_{m-1}\subset\dots\subset P_0=Q$$
and $x\not\in \O_{m+1}$. Hence, by the $\ell^r$-condition,
\begin{align*}
|f_Q(x)|^r&\leq C_r^r\has{|f_{P_m}(x)|^r+\sum_{k=0}^{m-1}|f_{P_{k+1},P_{k}}(x)|^r}\\
&=C_r^r\has{|f_{P_m}(x)|^r\chi_{E_{P_m}}(x)+\sum_{k=0}^{m-1}|f_{P_{k+1},P_{k}}(x)|^r\chi_{P_{k+1}}(x)}.
\end{align*}
In order to make this expression independent of the particular choice of $P_k$, we add zero terms. This allows us to write
$$|f_{P_m}(x)|^r\chi_{E_{P_m}}(x)=\sum_{k=0}^{\infty}\sum_{P\in {\mathcal F}_k}|f_P(x)|^r\chi_{E_P}(x)$$
and
$$\sum_{k=0}^{m-1}|f_{P_{k+1},P_{k}}(x)|^r\chi_{P_{k+1}}(x)=\sum_{k=0}^{\infty}\sum_{P\in {\mathcal F}_k}\sum_{P'\in {\mathcal F}_{k+1}:P'\subset P}|f_{P',P}(x)|^r\chi_{P'}(x),$$
which completes the proof.
\end{proof}

\subsection{A pointwise sparse domination principle}
In order to estimate  the terms $f_{P',P}$ in Proposition \ref{tsd} effectively, we make an additional assumption on the family $\{f_{P,Q}\}_{Q\in {\mathcal Q}, P\in {\mathcal D}(Q)}$. Indeed, we will assume for $Q \in \mc{Q}$ and $P \in \mc{D}(Q)$ that
\begin{equation}\label{cond}
|f_{P,Q}|\le |f_P|+|f_Q|.
\end{equation}
Observe that this assumption is not really restrictive. In particular, for the main example in \eqref{eq:mainfPQ}, (\ref{cond}) holds trivially.

Our first main result is the following.

\begin{theorem}\label{ofsdp} Let $r \in (0,\infty)$ and let $\{f_Q, f_{P,Q}\}_{Q\in {\mathcal Q}, P\in {\mathcal D}(Q)}$ satisfy the $\ell^r$-condition and (\ref{cond}).
For any $Q \in \mc{Q}$ and $\eta \in (0,1)$
there exists an $\eta$-sparse family ${\mathcal F}\subset \mc{D}(Q)$ such that for a.e. $x\in Q$,
$$|f_Q(x)|\lesssim C_r\Big(\sum_{P\in {\mathcal F}}\ga_P^r\chi_P(x)\Big)^{1/r},$$
where
$$\ga_P:=(f_{P}\chi_{P})^*\hab{|P|\tfrac{1-\eta}{2^{n+2}}}+(m^{\#}_Pf)^*\hab{|P|\tfrac{1-\eta}{2^{n+2}}}.$$
\end{theorem}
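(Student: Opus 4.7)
The plan is to combine Proposition \ref{tsd} with a Calder\'on--Zygmund-type stopping-time construction of the sparse family. Set $\lambda := (1-\eta)/2^{n+2}$ and $\mc{F}_0 := \{Q\}$. Given $P \in \mc{F}_k$, I define the \emph{bad set}
$$A_P := \cbraceb{x \in P : |f_P(x)| > (f_P\chi_P)^*(\lambda|P|)} \cup \cbraceb{x\in P : m^{\#}_Pf(x) > (m^{\#}_Pf)^*(\lambda|P|)},$$
which by \eqref{rear} satisfies $|A_P|\le 2\lambda|P|$. Since $A_P$ need not be dyadic, I ``thicken'' it: letting $M^d_P$ denote the dyadic maximal operator on $\mc{D}(P)$, set $\Omega_P := \{x\in P: M^d_P\chi_{A_P}(x) > 2^{-n-1}\}$, take $\mc{F}_{k+1}(P)$ to be the collection of maximal cubes of $\mc{D}(P)$ contained in $\Omega_P$, and put $\mc{F}_{k+1} := \bigcup_{P\in\mc{F}_k}\mc{F}_{k+1}(P)$ and $\mc{F} := \bigcup_{k\ge 0}\mc{F}_k$.

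I next verify sparseness. The weak $(1,1)$ bound for $M^d_P$ gives $|\Omega_P| \le 2^{n+1}|A_P| \le 2^{n+2}\lambda|P| = (1-\eta)|P|$, so $|E_P| \ge \eta|P|$, and iterating yields $|\Omega_k| \le (1-\eta)^k|Q|\to 0$, so $\mc{F}$ is contracting and $\eta$-sparse. Two consequences are crucial for the sequel: (i) by the Lebesgue differentiation theorem $A_P \subset \Omega_P$ up to a null set, so $E_P = P\setminus \Omega_P$ lies outside $A_P$ a.e.; (ii) for each $P'\in\mc{F}_{k+1}(P)$, its dyadic parent $\widehat{P'}$ is not contained in $\Omega_P$, whence $|P'\cap A_P| \le |\widehat{P'}\cap A_P| \le 2^{-n-1}|\widehat{P'}| = |P'|/2$.

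I then apply Proposition \ref{tsd} to $\mc{F}$ and estimate its two kinds of terms. By (i), $|f_P(x)|\le \gamma_P$ for a.e.\ $x\in E_P$, yielding $|f_P|^r\chi_{E_P} \le \gamma_P^r\chi_P$. For the off-diagonal terms with $P'\in\mc{F}_{k+1}(P)$, let $E_{P'}^1 := \{x\in P':|f_{P'}(x)|>(f_{P'}\chi_{P'})^*(\lambda|P'|)\}$, so that $|E_{P'}^1|\le \lambda|P'|$ by \eqref{rear}; combined with (ii), the set $P' \setminus(E_{P'}^1 \cup A_P)$ has measure $\ge (1/2-\lambda)|P'| > 0$. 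For any $y$ in this set and a.e.\ $x\in P'$, the oscillation inequality and \eqref{cond} applied to $f_{P',P}$ at $y$ give
$$|f_{P',P}(x)| \le \osc(f_{P',P};P') + |f_{P',P}(y)| \le m^{\#}_Pf(y) + |f_{P'}(y)| + |f_P(y)| \le \gamma_{P'} + 2\gamma_P.$$
Hence $|f_{P',P}|^r\chi_{P'} \lesssim (\gamma_{P'}^r + \gamma_P^r)\chi_{P'}$, and summing over the disjoint $P' \in \mc{F}_{k+1}(P) \subset P$ and then over $k$ collapses the right-hand side of Proposition \ref{tsd} to $\lesssim C_r^r \sum_{P\in\mc{F}}\gamma_P^r\chi_P$; extracting $r$-th roots finishes the proof.

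The main subtlety is that $A_P$ is not dyadic, so one cannot stop directly on $A_P$ while keeping a dyadic sparse family. Choosing the threshold $2^{-n-1}$ (rather than the more common $1/2$) in the definition of $\Omega_P$ is exactly what simultaneously ensures sparseness---the weak $(1,1)$ bound consumes precisely the factor $2^{n+2}$ appearing in $\lambda$---and leaves enough good points in each stopping child $P'$ to convert the crude triangle inequality \eqref{cond} into the usable pointwise estimate above via the oscillation.
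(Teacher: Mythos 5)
Your proposal is correct and follows essentially the same route as the paper's proof: the same rearrangement-defined exceptional set, the same Calder\'on--Zygmund-type thickening at height $2^{-n-1}$ (you phrase it via the dyadic maximal operator, the paper via the local CZ decomposition, but these produce the same stopping cubes and the same measure bounds), and the same selection of a good point $y$ in each child to convert \eqref{cond} and the oscillation into the bound $\gamma_{P'}+2\gamma_P$. No gaps.
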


\begin{proof}
We construct the family ${\mathcal F}\subset\mc{D}(Q)$ inductively. Set ${\mathcal F}_0=\{Q\}$. Next, given a collection of pairwise disjoint cubes ${\mathcal F}_k$, let
us describe how to construct ${\mathcal F}_{k+1}$.

Fix a cube $P\in {\mathcal F}_k$. Consider the sets
\begin{align*}
  \O_1(P)&:=\cbraceb{x\in P:|f_P(x)|>(f_P\chi_P)^*\hab{|P|\tfrac{1-\eta}{2^{n+2}}}},\\
  \O_2(P)&:=\cbraceb{x\in P:m_P^{\#}f(x)>\big(m_P^{\#}f\big)^*\hab{|P|\tfrac{1-\eta}{2^{n+2}}}},
\end{align*}
and denote $\O(P):=\O_1(P)\cup\O_2(P)$. Then, by (\ref{rear}), we know that $|\O(P)|\le \frac{1-\eta}{2^{n+1}}|P|$.

Apply the local Calder\'on--Zygmund decomposition to $\chi_{\O(P)}$ at height $\frac{1}{2^{n+1}}$. We obtain a family of
pairwise disjoint cubes $\mc{S}_P=\{P_j\}$, dyadic with respect to $P$, such that for
$N_P=\Omega(P)\setminus\cup_jP_j$ we have that $|N_P|=0$ and for every $P_j\in \mc{S}_P$,
\begin{equation}\label{cz}
\frac{1}{2^{n+1}}|P_j|\le |P_j\cap\Omega(P)|\le \frac{1}{2}|P_j|.
\end{equation}
In particular, it follows from this that
\begin{equation}\label{half}
\sum_j|P_j|\le 2^{n+1}|\O(P)|\le (1-\eta) |P|.
\end{equation}
We define  ${\mathcal F}_{k+1}=\cup_{P\in {\mathcal F}_k}\mc{S}_P$.
Setting ${\mathcal F}=\cup_{k=0}^{\infty}{\mathcal F}_k$, we note  by
\eqref{half} that ${\mathcal F}$ is $\eta$-sparse.

Let us now prepare to apply Proposition \ref{tsd} with the constructed family ${\mathcal F}$. Fix $P\in {\mathcal F}_k$ for some $k \in \N \cup \cbrace{0}$.
Since $\abs{N_P}=0$, almost every point of the set $\O_1(P)$ is covered by a cube $P'\in {\mathcal F}_{k+1}$.
Therefore, we have
\begin{equation}\label{Ep}
|f_P(x)|\le (f_P\chi_P)^*\hab{|P|\tfrac{1-\eta}{2^{n+2}}}, \qquad x \in E_P.
\end{equation}

It remains to estimate $|f_{P',P}|\chi_{P'}$ for
$P\in {\mathcal F}_k$ and $P'\in {\mathcal F}_{k+1}$ with $P'\subset P$. Denote $G_{P'}=P'\setminus \O(P)$.
Then, by \eqref{cz}, we have
$$|G_{P'}|\ge  \abs{P'} - \abs{P'\cap \Omega(P)} \geq \frac12|P'|.$$
Therefore, $|G_{P'}\setminus \O(P')|>0$ and hence, fixing
$$y\in G_{P'}\setminus \O(P')\subset P' \setminus\hab{\O_1(P) \cup \O_1(P')}$$
and applying \eqref{cond}, we obtain for a.e. $x\in P'$
\begin{eqnarray*}
|f_{P',P}(x)|&\le& \inf_{x' \in P'}m^{\#}_Pf(x')+|f_{P',P}(y)|\\
&\le& \big(m_P^{\#}f\big)^*\hab{|P|\tfrac{1-\eta}{2^{n+2}}}+|f_{P}(y)|+|f_{P'}(y)|\\
&\le& \ga_P+(f_{P'}\chi_{P'})^*\hab{|P'|\tfrac{1-\eta}{2^{n+2}}}.
\end{eqnarray*}

Combining this estimate with (\ref{Ep}) and Proposition \ref{tsd} yields for a.e. $x\in Q$ that
\begin{align*}
|f_Q(x)|^r&\le C_r^r\sum_{k=0}^{\infty}\sum_{P\in {\mathcal F}_k}\Big((f_P\chi_P)^*\hab{|P|\tfrac{1-\eta}{2^{n+2}}}^r\chi_{E_P}(x)\\
&\hspace{1cm}+\sum_{P'\in {\mathcal F}_{k+1}:P'\subset P}\big(\ga_P+(f_{P'}\chi_{P'})^*\hab{|P'|\tfrac{1-\eta}{2^{n+2}}}\big)^r\chi_{P'}(x)\Big)\\
&\le 2(2C_r)^r\sum_{k=0}^{\infty}\sum_{P\in {\mathcal F}_k}\ga_P^r\chi_P+(2C_r)^r\sum_{k=0}^{\infty}\sum_{P\in {\mathcal F}_k}\sum_{P'\in {\mathcal F}_{k+1}:P'\subset P}\ga_{P'}^r\chi_{P'}\\
&\le 3(2C_r)^r\sum_{P\in {\mathcal F}}\ga_P^r\chi_P,
\end{align*}
which completes the proof.
\end{proof}

\begin{remark}\label{localcase} It is easily seen from the proof that Theorem \ref{ofsdp} can be stated in a (formally stronger) local version. Namely, fix a cube $Q_0\in {\mathcal Q}$,
and assume that the $\ell^r$-condition and (\ref{cond}) hold for a family of functions $\{f_Q,f_{P,Q}\}$, where $P,Q\in {\mathcal D}(Q_0)$ and $P\subseteq Q$. Then for
any $\eta \in (0,1)$ there exists an $\eta$-sparse family ${\mathcal F}\subset \mc{D}(Q_0)$ such that for a.e. $x\in Q_0$,
$$|f_{Q_0}(x)|\lesssim C_r\Big(\sum_{P\in {\mathcal F}}\ga_P^r\chi_P(x)\Big)^{1/r}$$
with the same $\ga_P$ as in Theorem \ref{ofsdp}.
In particular, the family of all cubes ${\mathcal Q}$ in Theorem \ref{ofsdp} can be replaced by an arbitrary subfamily, for example, by a dyadic lattice ${\mathscr D}$.
\end{remark}

\subsection{A bilinear form sparse domination principle}
In certain applications the $m^{\#}_Qf$-term in the definition of $\gamma_Q$ in Theorem \ref{ofsdp} is too large to be efficiently estimated. We will therefore also prove a variant of Theorem \ref{ofsdp}, dominating $|f_Q|$ dually by a sparse form involving the smaller term $m^{\#}_{Q,q}f$ for $q \in (0,\infty)$.

\begin{theorem}\label{bf} Let $r \in (0,\infty)$ and let $\{f_Q, f_{P,Q}\}_{Q\in {\mathcal Q}, P\in {\mathcal D}(Q)}$ satisfy the $\ell^r$-condition and (\ref{cond}) and let $q \in (r,\infty)$.
For any $Q \in \mc{Q}$ and $\eta \in (0,1)$
there exists an $\eta$-sparse family ${\mathcal F}\subset {\mathcal D}(Q)$ such that for every measurable $g \colon \R^n \to \R_+$,
$$\int_Q|f_Q|^rg\lesssim C_r\sum_{P\in {\mathcal F}}\a_P^r\langle g\rangle_{(q/r)',P} |P|,$$
where
$$\a_P=(f_{P}\chi_{P})^*\hab{\abs{P}\tfrac{1-\eta}{2^{n+2}}}+(m^{\#}_{P,q}f)^*\hab{\abs{P}\tfrac{1-\eta}{2^{n+2}}}.$$
\end{theorem}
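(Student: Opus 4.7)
The plan is to follow the same stopping-time construction as in Theorem \ref{ofsdp}, but with $\Omega_2(P)$ defined in terms of $m^{\#}_{P,q}f$, and then to replace the pointwise control of $|f_{P',P}|$ used there by an $L^q$-averaged control that is paired against $g$ via H\"older's inequality.

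First I would construct the $\eta$-sparse family $\mc{F}\subset\mc{D}(Q)$ exactly as in the proof of Theorem \ref{ofsdp}, except with
$$\Omega_2(P) := \cbraceb{x\in P: m^{\#}_{P,q}f(x)>(m^{\#}_{P,q}f)^*\hab{|P|\tfrac{1-\eta}{2^{n+2}}}}.$$
Since this $\Omega_2(P)$ still satisfies $|\Omega(P)|\le\frac{1-\eta}{2^{n+1}}|P|$ by \eqref{rear}, the Calder\'on--Zygmund stopping and the resulting $\eta$-sparseness of $\mc{F}$ carry over verbatim. Multiplying the pointwise bound from Proposition \ref{tsd} by $g$ and integrating over $Q$ yields
\begin{align*}
\int_Q|f_Q|^rg &\le C_r^r\sum_{P\in\mc{F}}\int_{E_P}|f_P|^rg\\
&\quad + C_r^r\sum_{k=0}^\infty\sum_{P\in\mc{F}_k}\sum_{\substack{P'\in\mc{F}_{k+1}\\ P'\subset P}}\int_{P'}|f_{P',P}|^rg.
\end{align*}
The first sum is handled as in Theorem \ref{ofsdp}: since $E_P\subset P\setminus\Omega_1(P)$, one has $|f_P|\le\a_P$ a.e.\ on $E_P$, and $\int_{E_P}g\le|P|\langle g\rangle_{(q/r)',P}$ because $(q/r)'\ge1$.

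The new ingredient is the bound
$$\int_{P'}|f_{P',P}|^rg\lesssim(\a_P^r+\a_{P'}^r)|P'|\langle g\rangle_{(q/r)',P'}.$$
H\"older's inequality with exponents $q/r>1$ and $(q/r)'$ reduces this to proving
$$\has{\frac{1}{|P'|}\int_{P'}|f_{P',P}|^q}^{1/q}\lesssim\a_P+\a_{P'}.$$
Set $G:=G_{P'}\setminus\Omega(P')$; the same size estimate as in the proof of Theorem \ref{ofsdp} gives $|G|\gtrsim|P'|$. Averaging the identity
$$\frac{1}{|P'|^2}\int_{P'\times P'}|f_{P',P}(x)-f_{P',P}(y)|^q\dd x\dd y=\osc_q(f_{P',P};P')^q$$
over $y\in G$, the pigeonhole principle produces a single $y\in G$ for which
$$\has{\frac{1}{|P'|}\int_{P'}|f_{P',P}(x)-f_{P',P}(y)|^q\dd x}^{1/q}\lesssim\osc_q(f_{P',P};P')\le m^{\#}_{P,q}f(y)\le\a_P,$$
the last inequality using $y\notin\Omega_2(P)$. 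At this same $y$, the assumption \eqref{cond} and $y\notin\Omega_1(P)\cup\Omega_1(P')$ give $|f_{P',P}(y)|\le|f_P(y)|+|f_{P'}(y)|\le\a_P+\a_{P'}$. Combining the two estimates by a quasi-triangle inequality in $L^q$ (with constant depending only on $q$) yields the desired $L^q$-average bound.

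To reassemble the estimate, the $\a_{P'}^r$-contribution reindexes directly into $\sum_{P'\in\mc{F}}\a_{P'}^r|P'|\langle g\rangle_{(q/r)',P'}$, while the $\a_P^r$-contribution requires the disjointness inequality
$$\sum_{\substack{P'\in\mc{F}_{k+1}\\ P'\subset P}}|P'|\langle g\rangle_{(q/r)',P'}\le|P|\langle g\rangle_{(q/r)',P},$$
which follows from H\"older's inequality with exponents $(q/r)'$ and $q/r$ together with the pairwise disjointness of the $P'\subset P$. The main obstacle is precisely the replacement of the pointwise step from Theorem \ref{ofsdp}: the pointwise inequality $|f_{P',P}(x)|\le\osc(f_{P',P};P')+|f_{P',P}(y)|$ is no longer available for the integral oscillation, and the combination of pigeonholing on the good set $G$ with H\"older's inequality against $g$ is where the hypothesis $q>r$ naturally enters the argument.
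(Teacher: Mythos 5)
Your proposal is correct and follows essentially the same route as the paper: the same stopping-time construction with $m^{\#}_{P,q}f$ in $\Omega_2(P)$, the same good set $G_{P'}\setminus\Omega(P')$ of reference points $y$, the same use of \eqref{cond} and of $\osc_q(f_{P',P};P')\le\inf_{P'}m^{\#}_{P,q}f\le\a_P$, and the same final H\"older/disjointness reindexing of $\sum_{P'\subset P}|P'|\langle g\rangle_{(q/r)',P'}$. The only (cosmetic) difference is the order of operations — you first pigeonhole a single good $y$ to get the unconditional bound $\hab{\frac{1}{|P'|}\int_{P'}|f_{P',P}|^q}^{1/q}\lesssim\a_P+\a_{P'}$ and then apply H\"older against $g$, whereas the paper keeps $g$ inside, averages over all $y$ in the good set, and applies H\"older to the resulting double integral.
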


\begin{proof} The proof is similar to the proof of Theorem \ref{ofsdp} and hence some details are omitted.
Construct the $\eta$-sparse family ${\mathcal F}$ exactly as in the proof of Theorem~\ref{ofsdp}, only replacing $m_{P}^{\#}f$ by $m_{P,q}^{\#}f$ in the definition of $\O_2(P)$.

By Proposition \ref{tsd} we have
\begin{equation}\label{spl}
\begin{split}
\int_Q|f_Q|^rg\le C_r^r\sum_{k=0}^{\infty}\sum_{P\in {\mathcal F}_k} & \Big(\int_{E_P}|f_P|^rg+\sum_{\substack{P'\in {\mathcal F}_{k+1}:\\P'\subset P}}\int_{P'}|f_{P',P}|^rg\Big),
\end{split}
\end{equation}
and by (\ref{Ep})
\begin{equation}\label{fp}
\int_{E_P}|f_P|^rg\le \a_P^r\int_{P}g.
\end{equation}

It remains to estimate the second term on the right-hand side of (\ref{spl}). As in the proof of Theorem \ref{ofsdp},
we introduce the set $G_{P'}=P'\setminus \O(P)$ and observe that
\begin{equation}\label{mest}
|G_{P'}\setminus\O(P')|\ge \Big(\frac{1}{2}-\frac{1}{2^{n+3}}\Big)|P'|\ge \frac{1}{4}|P'|.
\end{equation}
For a.e. $y\in G_{P'}\setminus \O(P')\subset P' \setminus\hab{\O_1(P) \cup \O_1(P')}$ we have
\begin{align*}
\int_{P'}|f_{P',P}|^rg&\le 2^r\int_{P'}|f_{P',P}(x)-f_{P',P}(y)|^rg(x)\dd x+4^r(\a_{P'}^r+\a_{P}^r)\int_{P'}g.
\end{align*}
Integrating over $G_{P'}\setminus\O(P')$ and using (\ref{mest}), we obtain
\begin{align*}
\int_{P'}|f_{P',P}|^rg&\le 4\cdot2^r\frac{1}{|P'|}\int_{P'}\int_{P'}|f_{P',P}(x)-f_{P',P}(y)|^rg(x)\dd x\dd y\\
&\hspace{1cm}+4^r(\a_{P'}^r+\a_{P}^r)\int_{P'}g.
\end{align*}
By H\"older's inequality,
\begin{align*}
\frac{1}{|P'|}\int_{P'}\int_{P'}|f_{P',P}(x)&-f_{P',P}(y)|^rg(x)dxdy\\
&\le\osc_q(f_{P',P};P')^{r}\cdot \langle g\rangle_{(q/r)',P'}|P'|\\
&\le \inf_{x' \in P'}(m_{P,q}^{\#}f)(x')^r\langle g\rangle_{(q/r)',P'}|P'|\\&\le \a_{P}^r\langle g\rangle_{(q/r)',P'}|P'|,
\end{align*}
which, along with the previous estimate, implies
\begin{equation}\label{eq:fP'P}
\int_{P'}|f_{P',P}|^rg\le 5\cdot 4^r\Big(\a_{P}^r\langle g\rangle_{(q/r)',P'}|P'| + (\a_{P'}^r+\a_{P}^r)\int_{P'}g\Big).
\end{equation}

Now note that, by H\"older's inequality, we have
\begin{align*}
  \sum_{\substack{P'\in {\mathcal F}_{k+1}:\\P'\subset P}} \a_{P}^r\langle g\rangle_{(q/r)',P'}|P'| &\leq \a_{P}^r \has{\sum_{\substack{P'\in {\mathcal F}_{k+1}:\\P'\subset P}}  \int_{P'} g^{(q/r)'}}^{\frac{1}{(q/r)'}} \cdot \has{\sum_{\substack{P'\in {\mathcal F}_{k+1}:\\P'\subset P}}|P'|}^{\frac{r}{q}}\\
  &\leq \alpha_P^r \ip{g}_{(q/r)',P} \abs{P}
\end{align*}
Combining this estimate with \eqref{spl}, \eqref{fp} and \eqref{eq:fP'P}, we obtain
\begin{align*}
\int_Q|f_Q|^rg&\le 6\cdot 4^r\cdot C_r\Big(\sum_{k=0}^{\infty}\sum_{P\in {\mathcal F}_k}\Big(\a_P^r\int_Pg+\sum_{P'\in {\mathcal F}_{k+1}:P'\subset P}\a_{P'}^r\int_{P'}g\Big)\\&\hspace{1cm}+\sum_{k=0}^{\infty}\sum_{P\in {\mathcal F}_k}\a_{P}^r\langle g\rangle_{(q/r)',P}|P|\Big)\\
&\le 18\cdot  4^r \cdot C_r\sum_{P\in {\mathcal F}}\a_{P}^r\langle g\rangle_{(q/r)',P}|P|,
\end{align*}
which completes the proof.
\end{proof}

\subsection{Sparse domination in spaces of homogeneous type}
 A space of homogeneous type $(S,d,\mu)$, originally introduced by Coifman and Weiss in \cite{CW71}, is a set $S$ equipped with a quasi-metric $d$ and a doubling Borel measure $\mu$. That is,
 a metric $d$  which instead of the triangle inequality satisfies
\begin{equation*}
  d(s,t) \leq c_d\, \hab{d(s,u)+d(u,t)}, \qquad s,t,u\in S
\end{equation*}
for some $c_d\geq 1$, and a Borel measure $\mu$ that satisfies the doubling ball property
\begin{equation*}
  \mu\hab{B(s,2\rho)} \leq c_\mu \,\mu\hab{B(s,\rho)}, \qquad s \in S,\quad \rho>0
\end{equation*}
for some $c_\mu\geq 1$.

It was shown by Anderson and Vagharshakyan \cite{AV14} that the sparse domination principle based on the median oscillation estimate \eqref{lmo} could be generalised from the Euclidean space $\R^n$ equipped with the Lebesgue measure to a space of homogeneous type. Later, in \cite{Lo19b}, it was shown by the  second author that the sparse domination principle for operators in \cite{Le16, LO19} also generalises directly to spaces of homogeneous type. Doing similar adaptations as in \cite{Lo19b}, Theorems \ref{ofsdp} and \ref{bf} also generalise to this setting.

\section{Previous known results from our sparse domination principles}\label{sec:oldresults}
In this section we will show that Theorems \ref{ofsdp} and \ref{bf} imply a number of the previously known results.

\subsection{The local mean oscillation estimate} Let us start by showing that Theorem \ref{ofsdp} implies (\ref{lmo}) or, more generally, its vector-valued variant by H\"anninen and Hyt\"onen \cite{HH14}.

Let $X$ be a Banach space and $f \colon \R^n\to X$ be a strongly measurable function. Given $0<\la<1$, the local mean oscillation of $f$ on $Q \in \mc{Q}$ is defined by
$$\o_{\la}(f;Q)=\inf_{c \in X} \big(\nrm{f-c}_X\chi_Q\big)^*\big(\la|Q|\big).$$
Moreover, for $0<\lambda<\frac12$, we define the quasi-optimal center of oscillation $c_{\lambda}(f;Q)$ as any vector $c \in X$ such that
\begin{equation*}
  \big(\nrm{f-c}_X\chi_Q\big)^*\big(\la|Q|\big) \leq 2\, \o_{\la}(f;Q),
\end{equation*}
see \cite[Section 4]{HH14} for an introduction.
We will use the following property of this object:
for $0<\lambda \leq \kappa <\frac12$ we have for any quasi-optimal center of oscillation $c_\kappa(f;Q)$ that
\begin{equation}\label{eq:quasiincreasing}
  \big(\nrm{f-c_\kappa(f;Q)}_X\chi_Q\big)^*\big(\la|Q|\big) \leq 4\, \o_{\la}(f;Q),
\end{equation}
see \cite[Lemma 4.10]{HH14}.

\begin{theorem}[\cite{HH14}]\label{locmeanosc} Let $X$ be a Banach space and let $f\colon \R^n \to X$ be strongly measurable. For every cube $Q\in {\mathcal Q}$ and $\eta \in (0,1)$,
there exists an $\eta$-sparse family ${\mathcal F}\subset {\mathcal D}(Q)$ such that for any $c_{1/4}(f;Q)$ and for a.e. $x\in Q$,
$$\nrm{f(x)-c_{1/4}(f;Q)}_X\lesssim \sum_{P\in {\mathcal F}}\o_{\frac{1-\eta}{2^{n+2}}}(f;P)\chi_P(x).$$
\end{theorem}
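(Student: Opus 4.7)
The plan is to apply Theorem \ref{ofsdp} in its local form (Remark \ref{localcase}) with a choice of $\{f_Q, f_{P,Q}\}$ engineered so that the sharp maximal term in $\gamma_P$ vanishes identically. Let $Q_0$ denote the cube from the statement (written as $Q$ there), and fix once and for all a quasi-optimal center of oscillation $c_{1/4}(f;P) \in X$ for each $P \in \mathcal{D}(Q_0)$. Set
\begin{equation*}
f_Q(x) := \nrm{f(x) - c_{1/4}(f;Q)}_X, \qquad f_{P,Q}(x) := \nrm{c_{1/4}(f;P) - c_{1/4}(f;Q)}_X
\end{equation*}
for $P \subseteq Q$ in $\mathcal{D}(Q_0)$. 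The essential point is that $f_{P,Q}$ is a constant function of $x$, since it depends only on the two centers.

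I would next verify the hypotheses of Theorem \ref{ofsdp} with $r = 1$. The $\ell^1$-condition follows by telescoping and the triangle inequality in $X$: for $P_m \subset \dots \subset P_1$,
\begin{equation*}
f_{P_1}(x) \le \nrm{f(x) - c_{1/4}(f;P_m)}_X + \sum_{k=1}^{m-1} \nrm{c_{1/4}(f;P_k) - c_{1/4}(f;P_{k+1})}_X = f_{P_m}(x) + \sum_{k=1}^{m-1} f_{P_{k+1},P_k}(x),
\end{equation*}
so $C_1 = 1$. Condition (\ref{cond}) also follows from the triangle inequality, since $\nrm{c_{1/4}(f;P) - c_{1/4}(f;Q)}_X \le f_P(x) + f_Q(x)$ holds pointwise. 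Theorem \ref{ofsdp} with $r = 1$ then produces an $\eta$-sparse family $\mathcal{F} \subset \mathcal{D}(Q_0)$ with
\begin{equation*}
f_{Q_0}(x) \lesssim \sum_{P \in \mathcal{F}} \gamma_P \chi_P(x) \quad \text{a.e. } x \in Q_0,
\end{equation*}
where $\gamma_P = (f_P\chi_P)^*(\lambda|P|) + (m_P^{\#}f)^*(\lambda|P|)$ and $\lambda := \tfrac{1-\eta}{2^{n+2}}$.

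To finish, observe that because each $f_{P',P}$ is constant on $P'$, $\osc(f_{P',P};P') = 0$, so $m_P^{\#}f \equiv 0$ and the second term of $\gamma_P$ vanishes outright. For the first term, note $\lambda \le 1/4$ for every $n \ge 0$ and $\eta \in (0,1)$, so (\ref{eq:quasiincreasing}) with $\kappa = 1/4$ yields
\begin{equation*}
(f_P\chi_P)^*(\lambda|P|) = (\nrm{f - c_{1/4}(f;P)}_X \chi_P)^*(\lambda|P|) \le 4\,\o_\lambda(f;P),
\end{equation*}
so $\gamma_P \lesssim \o_\lambda(f;P)$ and the claimed inequality follows. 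I do not foresee a substantive obstacle: the argument hinges on the single observation that taking $f_{P,Q}$ to be the (constant) norm of the difference between centers, rather than the canonical $f_Q - f_P$, annihilates the sharp maximal term while remaining compatible with the $\ell^1$-condition via telescoping. A small remark at the end, using the fact that any two choices of quasi-optimal center at $Q_0$ differ by at most $4\,\o_{1/4}(f;Q_0) \le 4\,\o_\lambda(f;Q_0)$, handles the ``for any $c_{1/4}(f;Q)$'' clause of the statement by absorbing the discrepancy into the $P = Q_0$ term of the sparse sum.
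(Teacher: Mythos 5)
Your proposal is correct and follows essentially the same route as the paper: the same choice of $f_Q$ and the constant $f_{P,Q}$ built from the quasi-optimal centers, the same verification of the $\ell^1$-condition and \eqref{cond}, the same observation that $m_P^{\#}f\equiv 0$, and the same use of \eqref{eq:quasiincreasing} to bound $\gamma_P$. Your closing remark on reconciling different choices of $c_{1/4}(f;Q)$ at the top cube is a detail the paper leaves implicit, but it does not change the argument.
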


\begin{proof} For $Q\in {\mathcal Q}$ and $P\in {\mathcal D}(Q)$ define
\begin{align*}
  f_Q&:=\nrm{f-c_{1/4}(f;Q)}_X,\\
  f_{P,Q}&:=\nrm{c_{1/4}(f;P) - c_{1/4}(f;Q)},
\end{align*}
where for any cube $Q \in \mc{Q}$ we fix a quasi-optimal center of oscillation $c_{1/4}(f;Q)$.

The family  $\{f_Q, f_{P,Q}\}_{Q \in \mc{Q},P \in \mc{D}(Q)}$ satisfies the $\ell^1$-condition with ${C_1=1}$ by the triangle inequality, and (\ref{cond}) holds as well.
Therefore, by Theorem \ref{ofsdp}, there exists an $\eta$-sparse family ${\mathcal F}\subset {\mathcal D}(Q)$ such that for a.e. $x\in Q$,
$$\nrm{f(x)-c_{1/4}(f;Q)}_X \lesssim \sum_{P\in {\mathcal F}} \gamma_P \chi_P(x).$$

The function $f_{P',P}$ is a constant for every $P'\in {\mathcal D}(P)$, and therefore $m^{\#}_Pf=0$. Hence, by \eqref{eq:quasiincreasing},
$$\ga_P=\hab{\nrm{f-c_{1/4}(f;P)}_X\chi_{P}}^*\hab{|P|\tfrac{1-\eta}{2^{n+2}}}\le 4\o_{\frac{1-\eta}{2^{n+2}}}(f;P),$$
which completes the proof.
\end{proof}

\subsection{The \texorpdfstring{$\ell^r$}{lr}-sparse domination principle for operators}\label{subsection:ellr} Next we show that Theorem \ref{ofsdp} implies the main result of the second author in \cite[Theorem 3.2]{Lo19b} on pointwise $\ell^r$-sparse domination for a bounded operator $T\colon L^p \to L^{p,\infty}$. Moreover, due to our flexible setup, we also obtain the pointwise $\ell^r$-sparse domination result for bounded operators $T\colon L^p \to L^{q,\infty}$ in \cite[Theorem 3.4]{Lo19b} without any additional effort.

Let us first introduce some notation. Let ${\mathscr D}$ be a dyadic lattice, let $X,Y$ be Banach spaces, $p,q \in (0,\infty)$ and $\a\ge 1$. For a bounded operator
$$T\colon L^p(\R^n;X) \to L^{q,\infty}(\R^n;Y)$$ we say that a family of operators $\{T_Q\}_{Q\in {\mathscr D}}$ from $L^p(\R^n;X)$ to $L^{q,\infty}(Q;Y)$
is an \emph{$\a$-localisation family} of $T$ if for all $Q\in {\mathscr D}$ and $f\in L^p(\R^n;X)$ we have
\begin{align*}
  T_Q(f\chi_{\a Q})(x)&=T_Qf(x), && x\in Q,\\
  \|T_Q(f\chi_{\a Q})(x)\|_{Y}&\le \|T(f\chi_{\a Q})(x)\|_{Y},&&x\in Q.
\end{align*}
The canonical example is, of course, $T_Qf:=T_Q(f\chi_{\a Q})$ for all $Q \in \ms{D}$.

Using an $\alpha$-localisation family of $T$, we can define operator variants of the $\ell^r$-condition and the sharp maximal function $m_Q^{\#}f$. Indeed, set $T_{P,Q}:= T_Q-T_P$ and let $r \in (0,\infty)$.
We say that $T$ satisfies a \emph{localised $\ell^r$-estimate} if
for every $Q\in {\mathcal Q}$ and every $P_1,\dots, P_m\in {\mathcal D}(Q)$ with $P_m\subset\dots\subset P_1$ and $f \in L^p(\R^n;X)$, we have for a.e. $x \in P_m$
$$\|T_{P_1}f(x)\|_{Y}\le C_r\has{\sum_{k=1}^{m-1}\|T_{P_{k+1},P_{k}}f(x)\|_{Y}^r+\|T_{P_m}f(x)\|_{Y}^r}^{1/r}.$$

Observe that if $r \in (0,1]$ and $T_Qf = T(f \chi_{\alpha Q})$ for some $\alpha\geq 1$, then this estimate holds trivially for every (sub)linear operator $T$ with $C_r=1$.
If $T$ satisfies a localised $\ell^r$-estimate, then, setting
\begin{align}
  \label{eq:fQoperator} f_Q(x)&:=\|T_Qf(x)\|_Y&&  x \in \R^n\\
  \label{eq:fPQoperator}f_{P,Q}(x)&:=\|T_{P,Q}f(x)\|_{Y}, &&  x \in \R^n
\end{align}
for $Q \in \ms{D}$ and $P \in \mc{D}(Q)$, we obtain that the $\ell^r$-condition holds.

Next, the operator analogue of the sharp maximal function $m_Q^{\#}f$ for a cube $Q \in \mc{Q}$ is given by
$${\mathcal M}_{T,Q}^{\#}f(x)=\sup_{P\in {\mathcal D}(Q):x \in P}\esssup_{x',x''\in P}\,\nrmb{T_{P,Q}f(x')-T_{P,Q}f(x'')}_Y,\qquad x \in \R^n.$$
For $f_{P,Q}$ as in \eqref{eq:fPQoperator} we have
$$|f_{P,Q}(x')-f_{P,Q}(x'')|\le \|T_{P,Q}f(x')-T_{P,Q}f(x'')\|_Y,\qquad x',x''\in P$$
and therefore
\begin{equation}\label{twosharpfun}
m_Q^{\#}f(x)\le {\mathcal M}_{T,Q}^{\#}f(x), \qquad x \in \R^n.
\end{equation}

We are now ready to prove the announced result from \cite{Lo19b}, which in the diagonal case $p=q$ generalises the main result of \cite{LO19}.

\begin{theorem}[\cite{Lo19b}]\label{ellrsdp}
Let $\ms{D}$ be a dyadic lattice, let $X$ and $Y$ be Banach space, take $p,q,r \in (0,\infty)$ and let $\a\ge 1$. Suppose that
\begin{itemize}
\item $T$ is a bounded operator from $L^{p}(\R^n;X)$ to $L^{q,\infty}(\R^n;Y)$ with $\a$-localisation family $\{T_Q\}_{Q\in {\mathscr D}}$.
\item ${\mathcal M}_{T,Q}^{\#}$ is bounded from $L^{p}(\R^n;X)$ to $L^{q,\infty}(\R^n)$ uniformly in ${Q\in{\mathscr D}}$.
\item $T$ satisfies a localised $\ell^r$-estimate.
\end{itemize}
Then for any $f\in L^{p}(\R^n;X)$ and $Q\in {\mathscr D}$, there exists a $\frac{1}{2}$-sparse family ${\mathcal F}\subset {\mathcal D}(Q)$ such that for a.e. $x \in Q$,
$$\|T_Qf(x)\|_{Y}\lesssim  C_T \, C_r\, \Big(\sum_{P\in {\mathcal F}}\abs{\alpha P}^{\frac{r}{p}-\frac{r}{q}}\ip{\|f\|_X}_{p, \a P}^r\chi_P(x)\Big)^{1/r},$$
with $C_T:= \nrm{T}_{L^p\to L^{q,\infty}} + \sup_{Q \in \ms{D}} \nrm{\mc{M}_{T,Q}}_{L^p\to L^{q,\infty}}.$
\end{theorem}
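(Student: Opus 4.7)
The plan is to invoke the local version of Theorem \ref{ofsdp} (Remark \ref{localcase}) applied to the scalar family $\{f_Q, f_{P,Q}\}$ built from $T$ via \eqref{eq:fQoperator}--\eqref{eq:fPQoperator}, and then to bound the resulting $\gamma_P$ by the local $L^p$-average $\ip{\|f\|_X}_{p,\alpha P}$.

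The two structural hypotheses of Theorem \ref{ofsdp} are directly available. The $\ell^r$-condition for $\{f_Q, f_{P,Q}\}$ is exactly the localised $\ell^r$-estimate assumed on $T$, with the same constant $C_r$, while condition \eqref{cond} is immediate from $T_{P,Q} = T_Q - T_P$ and the triangle inequality in $Y$. Choosing $\eta = \frac12$ in Remark \ref{localcase} (so that $\frac{1-\eta}{2^{n+2}} = 2^{-n-3}$) produces a $\frac12$-sparse family $\mc{F} \subset \mc{D}(Q)$ such that for a.e. $x \in Q$,
$$\|T_Q f(x)\|_Y \lesssim C_r \Big(\sum_{P \in \mc{F}} \gamma_P^r \chi_P(x)\Big)^{1/r},$$
with $\gamma_P = (f_P\chi_P)^*(2^{-n-3}|P|) + (m_P^{\#} f)^*(2^{-n-3}|P|)$.

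The proof then reduces to showing $\gamma_P \lesssim C_T\, |\alpha P|^{\frac1p - \frac1q}\ip{\|f\|_X}_{p,\alpha P}$. For the first summand, the $\alpha$-localisation yields $\|T_P f(x)\|_Y \leq \|T(f\chi_{\alpha P})(x)\|_Y$ for $x \in P$. Combining this with the weak-type estimate $g^*(t) \leq t^{-1/q}\|g\|_{L^{q,\infty}}$ at $t = 2^{-n-3}|P|$, the assumed boundedness of $T$, the identity $\|f\chi_{\alpha P}\|_{L^p(\R^n;X)} = |\alpha P|^{1/p}\ip{\|f\|_X}_{p,\alpha P}$, and the scaling $|P|^{-1/q} = \alpha^{n/q}|\alpha P|^{-1/q}$ yields the claimed bound for $(f_P\chi_P)^*(2^{-n-3}|P|)$.

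The second summand requires an extra observation, which I expect to be the main subtlety: $\mc{M}_{T,P}^{\#}f$ on $P$ depends on $f$ only through $f \chi_{\alpha P}$. To see this, note that for any $P' \in \mc{D}(P)$ one has $\alpha P' \subseteq \alpha P$ (a short geometric check using $\ell(P') \leq \ell(P)$ together with $P' \subseteq P$), so applying the $\alpha$-localisation to both $T_{P'}$ and $T_P$ gives $T_{P',P} f = T_{P',P}(f\chi_{\alpha P})$ on $P'$. Hence $\mc{M}_{T,P}^{\#} f = \mc{M}_{T,P}^{\#}(f\chi_{\alpha P})$ on $P$, and the Chebyshev argument of the previous paragraph, combined with \eqref{twosharpfun} and the uniform weak-type boundedness of $\mc{M}_{T,P}^{\#}$, yields the same bound for $(m_P^{\#} f)^*(2^{-n-3}|P|)$. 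Summing the two estimates completes the bound on $\gamma_P$ and hence the proof, with all geometric factors depending only on $n$ and $\alpha$ absorbed into $\lesssim$.
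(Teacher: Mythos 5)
Your proposal is correct and follows essentially the same route as the paper: apply Theorem \ref{ofsdp} (via Remark \ref{localcase}) with $\eta=\tfrac12$ to the family \eqref{eq:fQoperator}--\eqref{eq:fPQoperator} and then bound $\gamma_P$ using the weak-type hypotheses on $T$ and $\mc{M}_{T,P}^{\#}$ together with \eqref{twosharpfun}. The one point you flag as a subtlety --- that $\alpha P'\subseteq \alpha P$ for $P'\in\mc{D}(P)$, so $\mc{M}_{T,P}^{\#}f=\mc{M}_{T,P}^{\#}(f\chi_{\alpha P})$ on $P$ and the local average $\ip{\|f\|_X}_{p,\alpha P}$ appears --- is indeed needed and is left implicit in the paper's proof, so spelling it out is a welcome addition rather than a deviation.
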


\begin{proof}
The essence of the proof is already contained in the discussion preceding the theorem. Indeed, let $\cbrace{f_Q,f_{P,Q}}_{Q \in \ms{D}, P \in \mc{D}(Q)}$ be as in \eqref{eq:fQoperator} and \eqref{eq:fPQoperator}, which satisfies the $\ell^r$-condition, and also (\ref{cond}) holds.
Hence we are in position to apply Theorem \ref{ofsdp} with $\eta=\frac{1}{2}$ (see also Remark~\ref{localcase}). It remains to estimate $\ga_P$ provided by this theorem.

By the assumption on $T$ we have
$$(f_P\chi_P)^*(|P|/2^{n+3})\lesssim \nrm{T}_{L^p\to L^{q,\infty}} \cdot \abs{\alpha P}^{\frac{1}{p}-\frac{1}{q}}  \langle\|f\|_X\rangle_{p, \a P}.$$
Moreover, by the assumption on ${\mathcal M}_{T,P}^{\#}$ and (\ref{twosharpfun}), we have
$$(m^{\#}_P\chi_P)^*(|P|/2^{n+3})\lesssim \nrm{\mc{M}_{T,Q}}_{L^p\to L^{q,\infty}} \cdot \abs{\alpha P}^{\frac{1}{p}-\frac{1}{q}}\langle\|f\|_X\rangle_{p, \a P},$$
which completes the proof.
\end{proof}

\begin{remark}
  In \cite[Theorem 3.5]{Lo19b} a sparse \emph{form} domination principle was shown for operators. Analogous to the proof of Theorem \ref{ellrsdp}, one can deduce \cite[Theorem 3.5]{Lo19b} from our sparse form domination principle in Theorem \ref{bf}.
\end{remark}

\begin{remark}\label{knownapp}
Both papers \cite{LO19} and \cite{Lo19b} contain a list of known sparse domination results for operators that fit our setting.
For reader's convenience, we include a unified and extended list below.
\begin{itemize}
\item Calder\'on--Zygmund operators \cite{LO19} with operator-valued kernel \cite{Lo19b}.
\item Maximally modulated Calder\'on--Zygmund operators \cite{Be17}.
\item Variational truncations of Calder\'on--Zygmund operators \cite{MTX15, Zo20}.
\item Multilinear singular integral operators with $L^r$-H\"ormander condition \cite{Li18}.
\item Fractional integral operators with Horm\"ander kernel \cite{IRV18}.
\item A class of pseudo-differential operators \cite{BC20}.
\item The Rademacher \cite{Lo19b} and the lattice Hardy--Littlewood \cite{HL17} maximal operators.
\item The intrinsic Littlewood--Paley square function \cite{Lo21b}.
\item Nonintegral operators falling outside the scope of Calder\'on--Zyg\-mund theory \cite{BFP16}
and the associated square functions \cite{BBR20}.
\item Rough homogeneous singular integrals \cite{Le19}.
\item The Marcinkiewicz integral with rough kernel \cite{TH}.
\end{itemize}

Let us note that the boundedness of the corresponding sharp maximal operator is not explicitly contained in all of the above cited works and, in particular, sparse domination is obtained in a self-contained way in many of these citations. However, the presented arguments often imply the boundedness of the corresponding sharp maximal operator in our setting. For more details we refer to \cite[Section 5]{LO19} and \cite[Section 9]{Lo19b}.

The three last items from the list fit the setting of bilinear form sparse domination expressed in Theorem \ref{bf}.
\end{remark}

\section{Generalised Poincar\'e--Sobolev inequalities}\label{sec:poincare}
As a first new application of our operator-free sparse domination principle, we will study generalised Poincar\'e--Sobolev inequalities as in~\cite{CMPR21,PR19}. In particular, we will extend and improve
\cite[Theorem 1.5 and 1.24]{PR19} by P\'erez and Rela.

Let us introduce some notation. Let $p,s \in [1,\infty)$. For a functional $a \colon \mc{Q} \to \R_+$ and a weight $w$ we say that $a$ satisfies the $SD_p^s(w)$-condition, and write $a \in SD_p^s(w)$, if for any cube $Q \in \mc{Q}$ and any family of pairwise disjoint $\cbrace{Q_j} \subset \mc{D}(Q)$ we have
\begin{equation*}
  \has{\frac{1}{w(Q)}\sum_{j} a(Q_j)^p w(Q_j)}^{1/p} \leq C\has{\frac{\sum_{j}\abs{Q_j}}{\abs{Q}}}^{{1/s}} a(Q).
\end{equation*}
The least admissible constant $C\geq 1$ is denoted by $\nrm{a}_{SD_p^s(w)}$.
We note that the $SD_p^s(w)$-condition can be thought of as an $s$-smallness preserving condition and for examples of functionals $a \in SD_p^s(w)$ we refer to~\cite{PR19}.

Fix a cube $Q \in \mc{Q}$ and $f \in L^1_{\loc}(\R^n)$. For $m \in \N \cup \cbrace{0}$, we denote by $P_Qf$ the projection of $f$ onto the space of polynomials of degree at most $m$ in $n$ variables on $Q$. We refer to \cite[Section 8]{PR19} for a proper introduction of this projection. Here we just note the following two properties that we will use of $P_Qf$:
\begin{itemize}
  \item There is a $C_m>0$ such that
  \begin{equation}\label{eq:P_Qaverage}
    \nrm{P_Qf}_{L^\infty(Q)} \leq C_m \frac{1}{\abs{Q}} \int_Q\abs{f}.
  \end{equation}
  \item For any polynomial $\pi$ of degree at most $m$ in $n$ variables we have $P_Q(\pi) = \pi$  on $Q$.
\end{itemize}
Furthermore we note that when $m=0$, we have $P_Qf =\frac{1}{|Q|}\int_Qf$.

In this language, the main result of P\'erez and Rela reads as follows:

\begin{theorem}[\cite{PR19}]\label{PerezRela} Let $p,s\in [1,\infty)$ and let $w\in A_{\infty}$. Assume that $a\in SD_p^s(w)$.
Let $f\in L^1_{\loc}({\mathbb R}^n)$ be such that for all $Q \in \mc{Q}$,
\begin{equation*}
    \frac{1}{\abs{Q}}\int_Q \abs{{f - P_Qf}} \leq a(Q).
  \end{equation*}
Then,  there is a constant $C_{n,m}>0$ such that for any $Q \in \mc{Q}$

    \begin{equation}\label{sdep}
    \has{\frac{1}{w\ha{Q}}\int_Q \abs{{f - P_Qf}}^pw}^{\frac1p} \leq C_{n,m}\,  s \, \nrm{a}^s_{SD_p^s(w)} \, a(Q).
  \end{equation}
\end{theorem}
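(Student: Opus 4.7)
The plan is to cast the Poincar\'e--Sobolev setup into the framework of Theorem \ref{ofsdp} using the canonical compatibility choice. Set
$$f_Q := f - P_Qf, \qquad f_{P,Q} := f_Q - f_P = P_Pf - P_Qf$$
for $Q \in \mc{Q}$ and $P \in \mc{D}(Q)$. The $\ell^1$-condition holds with $C_1 = 1$ by telescoping and \eqref{cond} holds by the triangle inequality, so Theorem \ref{ofsdp} (with $r = 1$ and a parameter $\eta \in (0,1)$ to be chosen) produces an $\eta$-sparse family $\mc{F} \subset \mc{D}(Q)$ with
$$|f(x) - P_Qf(x)| \lesssim \sum_{P \in \mc{F}} \gamma_P \chi_P(x), \qquad x \in Q,$$
where $\gamma_P = (f_P\chi_P)^*(c_\eta|P|) + (m_P^{\#}f)^*(c_\eta|P|)$ and $c_\eta := (1-\eta)/2^{n+2}$.

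Next I would show that $\gamma_P \lesssim_{n,m,\eta} a(P)$. For the first summand, Chebyshev's inequality and the standing hypothesis give $(f_P\chi_P)^*(c_\eta|P|) \leq c_\eta^{-1} \ip{f - P_Pf}_{1,P} \leq c_\eta^{-1} a(P)$. For the second, the crucial observation is that $P_Pf$ is a polynomial of degree at most $m$ and $P_{P'}$ reproduces such polynomials, whence
$$f_{P',P} = P_{P'}f - P_Pf = P_{P'}(f - P_Pf) \qquad \text{on } P'$$
for every $P' \in \mc{D}(P)$. Then \eqref{eq:P_Qaverage} yields $\osc(f_{P',P}; P') \leq 2\nrm{P_{P'}(f-P_Pf)}_{L^\infty(P')} \leq 2C_m \ip{f-P_Pf}_{1,P'}$, so $m_P^{\#}f \leq 2C_m M^{\mc{D}(P)}(|f-P_Pf|\chi_P)$, where $M^{\mc{D}(P)}$ denotes the dyadic maximal operator on $P$, and the weak $L^1$-bound for this maximal operator gives $(m_P^{\#}f)^*(c_\eta|P|) \leq 2C_m c_\eta^{-1} a(P)$.

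It remains to control $\nrmb{\sum_{P \in \mc{F}} a(P)\chi_P}_{L^p(w)}$ using only the $SD_p^s(w)$-condition and the $\eta$-sparseness. Writing $\mc{F} = \bigcup_{k \geq 0} \mc{F}_k$ and $\sigma_k := \sum_{P \in \mc{F}_k} a(P)^p w(P)$, disjointness within each $\mc{F}_k$ gives $\sigma_k^{1/p} = \nrmb{\sum_{P \in \mc{F}_k} a(P)\chi_P}_{L^p(w)}$. For any $P \in \mc{F}_k$ its children in $\mc{F}_{k+1}$ are pairwise disjoint dyadic subcubes of $P$ of total measure at most $(1-\eta)|P|$, so
$$\sum_{P' \subset P,\, P' \in \mc{F}_{k+1}} a(P')^p w(P') \leq \nrm{a}^p_{SD_p^s(w)}(1-\eta)^{p/s} a(P)^p w(P),$$
which summed over $P \in \mc{F}_k$ yields $\sigma_{k+1} \leq \tau^p \sigma_k$ with $\tau := \nrm{a}_{SD_p^s(w)}(1-\eta)^{1/s}$. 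Choosing $\eta$ so that $\tau = \tfrac{1}{2}$, namely $\eta := 1 - (2\nrm{a}_{SD_p^s(w)})^{-s}$ (valid since $\nrm{a}_{SD_p^s(w)} \geq 1$), Minkowski's inequality and the resulting geometric series give
$$\nrms{\sum_{P\in \mc{F}} a(P)\chi_P}_{L^p(w)} \leq \sum_{k \geq 0} \sigma_k^{1/p} \leq 2\, a(Q)\, w(Q)^{1/p},$$
and dividing by $w(Q)^{1/p}$ yields \eqref{sdep}.

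The only step requiring a nontrivial observation is the bound on $m_P^{\#}f$: the polynomial reproducing identity $P_{P'}f - P_Pf = P_{P'}(f - P_Pf)$ on $P'$ is what converts the oscillation of a generally non-constant polynomial into an $L^1$-average accessible from the hypothesis. I would finally point out that the $A_\infty$-hypothesis on $w$ is never invoked in the above argument, in line with the improvement over \cite{PR19} advertised in the introduction.
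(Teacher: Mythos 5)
Your reduction to Theorem \ref{ofsdp} coincides with the paper's Proposition \ref{sparsepolyn}: the same choices $f_Q=f-P_Qf$, $f_{R,Q}=P_Rf-P_Qf$, the same key identity $f_{R',R}=P_{R'}(f-P_Rf)$ on $R'$ leading to $m_P^{\#}f\lesssim_m M\hab{(f-P_Pf)\chi_P}$, and hence $\ga_P\le C_{n,m}(1-\eta)^{-1}a(P)$. All of that is correct. The gap is in the final summation, where you leave the paper's route. Iterating the $SD_p^s(w)$-condition from each sparse cube to its sparse children picks up one factor of $\nrm{a}_{SD_p^s(w)}$ per generation, which forces you to take $1-\eta=(2\nrm{a}_{SD_p^s(w)})^{-s}$; but then the factor $(1-\eta)^{-1}=2^{n+2}\,2^{s}\,\nrm{a}_{SD_p^s(w)}^{s}$ sitting inside $\ga_P$ enters the conclusion, and what you have actually proved is \eqref{sdep} with $2^{s}$ in place of $s$. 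Since $2^{s}/s\to\infty$, this is not the stated estimate, and in a theorem whose whole content is the quantitative dependence on $s$ and $\nrm{a}_{SD_p^s(w)}$ this matters. The defect is repairable inside your scheme: choose $\eta$ so that $\tau=\nrm{a}_{SD_p^s(w)}(1-\eta)^{1/s}=\frac{s}{s+1}$ rather than $\frac12$; then $\sum_{k\ge0}\tau^{k}=s+1$ while $(1-\eta)^{-1}=(1+\frac1s)^{s}\nrm{a}_{SD_p^s(w)}^{s}\le e\,\nrm{a}_{SD_p^s(w)}^{s}$, and the two factors combine to $C_{n,m}(s+1)\nrm{a}_{SD_p^s(w)}^{s}a(Q)$, as claimed.

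The paper proceeds differently at this point and obtains more. It keeps the sparse family at the fixed value $\eta=\frac12$, so that the constant in Proposition \ref{sparsepolyn} is absolute, and applies the smallness condition once per sparse generation $\mc{F}_k$ \emph{relative to the top cube} $Q$ rather than parent by parent: since $\sum_{R\in\mc{F}_k}\abs{R}\le 2^{-k}\abs{Q}$, the $SD_p^s(w)$-condition gives $\hab{\frac{1}{w(Q)}\sum_{R\in\mc{F}_k}a(R)^pw(R)}^{1/p}\le\nrm{a}_{SD_p^s(w)}2^{-k/s}a(Q)$, and summing over $k$ yields $(s+1)\nrm{a}_{SD_p^s(w)}a(Q)$. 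This is linear in $\nrm{a}_{SD_p^s(w)}$, which is the improvement over \eqref{sdep} emphasised after Theorem \ref{genbanachx}; your parent-to-child iteration cannot beat $\nrm{a}_{SD_p^s(w)}^{s}$. Your closing remark that the $A_\infty$ hypothesis is never used is correct and applies to both arguments.
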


Note that when $m \geq 1$, the result of P\'erez and Rela has an additional factor  $2^{\frac{s+1}{p'}}$ in the conclusion, but it was observed in \cite[Theorem 2.1]{CMPR21} that this factor can be omitted.

It was asked in \cite[Remark 1.6]{PR19} whether the $A_{\infty}$ assumption in Theorem \ref{PerezRela} can be removed. A partial result in this direction was provided by Mart\'inez-Perales \cite{Ma19}.

In order to state our main result, we will replace $L^p(w)$-averages by arbitrary Banach function
norms (see e.g. \cite{BS88,Za67}). First we define a more general smallness preserving condition with respect to a Banach function norm. Note that the following condition with
\begin{align}\label{eq:perezrelasetting}
\begin{aligned}
    \nrm{f}_{X_Q} &:= \has{\frac{1}{w(Q)}\int_Q \abs{f}^pw}^{1/p}, \qquad && Q \in \mc{Q},\\
  \varphi(t) &:= \nrm{a}_{SD_p^s} \cdot t^{\frac{1}{s}}, && t \in [0,1],
\end{aligned}
\end{align}
 coincides with the definition of the $SD_p^s(w)$-condition.

\begin{definition}\label{genacond} For $Q \in \mc{Q}$ let $\nrm{\,\cdot\,}_{X_Q}$ be a Banach function norm and let $\varphi\colon [0,1] \to \R_+$ be increasing. For a functional $a \colon \mc{Q} \to \R_+$ we say that $a$ satisfies the $\varphi$-smallness preserving condition if for any $Q \in \mc{Q}$ and any family of pairwise disjoint $\cbrace{Q_j} \subset \mc{D}(Q)$ we have
\begin{equation}\label{genacond1}
\nrmb{\sum_{j}a(Q_j)\chi_{Q_j}}_{X_Q}\leq \varphi\has{\frac{\sum_{j}\abs{Q_j}}{\abs{Q}}}\cdot a(Q).
\end{equation}
\end{definition}

We are now ready to state the main result of this section.

\begin{theorem}\label{genbanachx} Fix $f \in L^1_{\loc}(\R^n)$.
 For $Q \in \mc{Q}$ let $\nrm{\,\cdot\,}_{X_Q}$ be a Banach function norm and let  $a \colon \mc{Q} \to \R_+$ satisfy both the $\varphi$-smallness preserving condition and for all $Q \in \mc{Q}$
  \begin{equation*}
    \frac{1}{\abs{Q}}\int_Q \abs{{f - P_Qf}} \leq a(Q).
  \end{equation*}
Then there is a $C_{n,m}>0$ such that for all $Q \in \mc{Q}$
\begin{equation}\label{lins}
\|(f-P_Qf)\chi_Q\|_{X_Q}\le C_{n,m} \,a(Q) \cdot \has{\int_0^1 \varphi(t) \tfrac{\ddn t}{t} +\varphi(1)}.
\end{equation}
\end{theorem}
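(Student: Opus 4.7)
The plan is to apply the local version of Theorem~\ref{ofsdp} (see Remark~\ref{localcase}) with $r=1$, then convert the resulting pointwise sparse domination into an $X_Q$-estimate by summing over generations of the sparse family and invoking the $\varphi$-smallness preserving condition.

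Fixing $Q \in \mc{Q}$ and $\eta \in (0,1)$, I would set, for dyadic cubes $P \subseteq Q'$ in $\mc{D}(Q)$,
\[
f_P(x) := \abs{f(x) - P_Pf(x)}, \qquad f_{P,Q'}(x) := \abs{P_Pf(x) - P_{Q'}f(x)}.
\]
The $\ell^1$-condition holds with $C_1=1$ by telescoping $P_{P_1}f - P_{P_m}f = \sum_{k=1}^{m-1}(P_{P_k}f - P_{P_{k+1}}f)$ and the triangle inequality, and \eqref{cond} follows from $\abs{P_Pf - P_{Q'}f} \leq \abs{P_Pf - f} + \abs{f - P_{Q'}f}$. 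Theorem~\ref{ofsdp} then yields an $\eta$-sparse family $\mc{F} \subseteq \mc{D}(Q)$ with $\abs{(f - P_Qf)\chi_Q} \lesssim \sum_{P \in \mc{F}} \gamma_P \chi_P$ almost everywhere.

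The main technical step is to verify $\gamma_P \lesssim_{n,m,\eta} a(P)$. The term $(f_P\chi_P)^*\hab{|P|\tfrac{1-\eta}{2^{n+2}}}$ is bounded by Chebyshev's inequality together with $\frac{1}{|P|}\int_P \abs{f - P_Pf} \leq a(P)$. For the sharp-maximal term I would exploit the identity $P_Rf - P_Pf = P_R(f - P_Pf)$ on $R \subseteq P$, which is valid because $P_Pf$ is a polynomial of degree at most $m$ and $P_R$ fixes such polynomials; combined with \eqref{eq:P_Qaverage}, it yields
\[
\osc(f_{R,P}; R) \leq 2\nrm{P_R(f - P_Pf)}_{L^\infty(R)} \leq \frac{2C_m}{|R|}\int_R \abs{f - P_Pf}.
\]
Taking the supremum over $R \in \mc{D}(P)$ containing $x$ gives $m_P^{\#}f \leq 2C_m\, M^{d,P}(\abs{f - P_Pf})$, the dyadic maximal function on $P$, whose weak-$L^1$ bound then controls $(m_P^{\#}f)^*\hab{|P|\tfrac{1-\eta}{2^{n+2}}}$ by a multiple of $a(P)$.

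With the pointwise estimate $\abs{(f - P_Qf)\chi_Q} \lesssim \sum_{P \in \mc{F}} a(P) \chi_P$ established, I would take $X_Q$-norms, decompose $\mc{F} = \bigcup_k \mc{F}_k$ into generations, and apply the $\varphi$-smallness preserving condition to each pairwise disjoint family $\mc{F}_k$, obtaining $\nrmb{\sum_{P \in \mc{F}_k} a(P)\chi_P}_{X_Q} \leq \varphi(|\Omega_k|/|Q|)\, a(Q)$. The iterative estimate $|\Omega_k| \leq (1-\eta)^k |Q|$, which follows from \eqref{half} applied at each generation, reduces the total sum to $a(Q)\sum_{k\geq 0} \varphi((1-\eta)^k)$, and comparing this geometric series to $\int_0^1 \varphi(t)\tfrac{\dd t}{t}$ via the monotonicity of $\varphi$ produces \eqref{lins} upon fixing, say, $\eta = 1/2$. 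The principal obstacle lies in the pointwise bound on $m_P^{\#}f$: a naive triangle estimate of $\nrm{P_Rf - P_Pf}_{L^\infty(R)}$ would introduce the uncontrolled term $\sup_{R \ni x, R \in \mc{D}(P)} a(R)$, which cannot be handled without further hypotheses on~$X_Q$; the identity $P_Rf - P_Pf = P_R(f - P_Pf)$ is precisely what removes this obstruction.
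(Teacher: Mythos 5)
Your proposal is correct and follows essentially the same route as the paper: the same choice of $f_P$ and $f_{P,Q'}$, the same key identity $P_Rf-P_Pf=P_R(f-P_Pf)$ combined with \eqref{eq:P_Qaverage} to control $m_P^{\#}f$ by a maximal function, and the same generation-by-generation application of the $\varphi$-smallness preserving condition with the comparison $\sum_{k\ge 1}\varphi(2^{-k})\lesssim\int_0^1\varphi(t)\tfrac{\ddn t}{t}$. No gaps.
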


Taking $X_Q$ and $\varphi$ as in \eqref{eq:perezrelasetting}, we have
\begin{equation*}
  \int_0^1 \varphi(t) \tfrac{\ddn t}{t} +\varphi(1)= (s+1) \, \nrm{a}_{SD_p^s}.
\end{equation*}
Thus we obtain an extension of Theorem~\ref{PerezRela} to arbitrary weights, which provides an affirmative answer to the question posed in \cite[Remark 1.6]{PR19}. Moreover, we have a quantitative improvement over Theorem \ref{PerezRela}, even in the case $m=0$ and $s>1$,
since (\ref{lins}) holds with linear dependence on $\nrm{a}_{SD_p^s(w)}$, whereas one has  $\nrm{a}^s_{SD_p^s(w)}$ in (\ref{sdep}).

The key ingredient in our proof of Theorem \ref{genbanachx} is the following sparse domination result in the spirit of Theorem \ref{locmeanosc}.

\begin{prop}\label{sparsepolyn}
Let $f \in L^1_{\loc}(\R^n)$. For any $Q \in \mc{Q}$ and $\eta \in (0,1)$
there exists an $\eta$-sparse family ${\mathcal F}\subset \mc{D}(Q)$ such that
$$
|f-P_Qf| \chi_Q\le C_{n,m}\frac{1}{1-\eta}\sum_{R\in {\mathcal F}}\left(\frac{1}{\abs{R}}\int_R\abs{f - P_Rf}\right)\chi_R.
$$
\end{prop}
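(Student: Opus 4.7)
The plan is to apply Theorem~\ref{ofsdp} with $r=1$ to the family
\begin{equation*}
  f_Q := \abs{f - P_Qf}, \qquad f_{P,Q} := \abs{P_Qf - P_Pf}\chi_P,
\end{equation*}
for $Q \in \mc{Q}$ and $P \in \mc{D}(Q)$. First I would verify the hypotheses of Theorem~\ref{ofsdp}: the $\ell^1$-condition holds with $C_1=1$ via the telescoping identity $P_{P_1}f - f = (P_{P_m}f - f) + \sum_{k=1}^{m-1}(P_{P_k}f - P_{P_{k+1}}f)$ and the triangle inequality; condition (\ref{cond}) is immediate from $\abs{P_Qf - P_Pf} \leq \abs{P_Qf - f} + \abs{f - P_Pf}$.

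The heart of the proof is then to control the quantity $\gamma_P$ produced by Theorem~\ref{ofsdp}. For the first summand, Chebyshev's inequality (\ref{rearch}) gives directly
\begin{equation*}
  (f_P \chi_P)^*\has{\abs{P}\tfrac{1-\eta}{2^{n+2}}} \leq \frac{2^{n+2}}{(1-\eta)\abs{P}}\int_P \abs{f - P_Pf}.
\end{equation*}
The main obstacle, and the step where the specific structure of polynomial projections comes in, is controlling $(m^{\#}_P f)^*$. The key observation would be that for any $R\in\mc{D}(P)$ the difference $P_Pf - P_Rf$ is a polynomial of degree at most $m$ on $R$, and since $P_R$ fixes such polynomials, $P_Pf - P_Rf = P_R(P_Pf - f)$ on $R$. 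Combined with \eqref{eq:P_Qaverage} and the fact that taking absolute values cannot enlarge oscillation, this gives
\begin{equation*}
  \osc(f_{R,P};R) \leq 2\nrmb{P_Pf - P_Rf}_{L^\infty(R)} \leq 2C_m \cdot \frac{1}{\abs{R}}\int_R\abs{f - P_Pf}.
\end{equation*}

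Taking the supremum over $R \in \mc{D}(P)$ containing $x$ then bounds $m^{\#}_P f(x)$ pointwise by $2C_m$ times the dyadic maximal function of $\abs{f-P_Pf}\chi_P$ with respect to $\mc{D}(P)$, whose weak $(1,1)$ bound (with constant~$1$) yields
\begin{equation*}
  (m^{\#}_P f)^*\has{\abs{P}\tfrac{1-\eta}{2^{n+2}}} \leq \frac{2^{n+3}C_m}{(1-\eta)\abs{P}}\int_P \abs{f - P_Pf}.
\end{equation*}
Combining the two rearrangement estimates gives $\gamma_P \lesssim \frac{C_m}{1-\eta}\cdot \frac{1}{\abs{P}}\int_P\abs{f-P_Pf}$, and plugging this back into the conclusion of Theorem~\ref{ofsdp} produces the announced sparse family $\mc{F}\subset\mc{D}(Q)$ and the desired pointwise inequality. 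Beyond routine verifications, the one delicate point is the identity $P_R(P_Pf) = P_Pf$ on $R$, which is what converts an oscillation of absolute values of polynomials into the clean $L^1$-average of $f - P_Pf$ that we need.
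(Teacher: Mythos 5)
Your proposal is correct and follows essentially the same route as the paper: the same choice of family (up to the inessential absolute values and the $\chi_P$ factor), the same telescoping verification of the $\ell^1$-condition, and the same key identity $P_Rf - P_Pf = P_R(f - P_Pf)$ on $R$ combined with \eqref{eq:P_Qaverage} to bound $m^{\#}_Pf$ by a maximal function of $(f-P_Pf)\chi_P$, followed by Chebyshev and the weak $(1,1)$ bound. The only cosmetic difference is your use of the dyadic maximal operator where the paper uses the Hardy--Littlewood maximal operator, which changes nothing.
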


\begin{proof}
For $Q \in \mc{Q}$ and $R \in \mc{D}(Q)$ define
\begin{align*}
  f_Q &:= {{f - P_Qf}}\\
  f_{R,Q} &:={{P_Rf - P_Qf}}.
\end{align*}
The family  $\{f_Q,f_{R,Q}\}_{Q\in {\mathcal Q}, R\in {\mathcal D}(Q)}$ trivially satisfies the $\ell^1$-condition with $C_r=1$, and (\ref{cond}) holds.

For any $R' \in \mc{D}(R)$ we have by \eqref{eq:P_Qaverage}
\begin{equation*}
  \nrm{f_{R',R}}_{L^\infty(R')} =  \nrm{P_{R'}(f-P_{R}f)}_{L^\infty(R')} \leq C_m \, \frac{1}{\abs{R'}} \int_{R'} \abs{f - P_{R}f},
\end{equation*}
which implies
\begin{equation*}
  m^{\#}_Rf(x) \leq 2 C_mM \hab{(f-P_Rf)\chi_R}(x), \qquad x \in \R^n.
\end{equation*}
Therefore, by Chebyshev's inequality and the weak $L^1$-boundedness of~$M$, we have for any $\eta \in (0,1)$,
\begin{equation*}
  (f_{R}\chi_{R})^*\hab{|R|\tfrac{1-\eta}{2^{n+2}}}+(m^{\#}_Rf)^*\hab{|R|\tfrac{1-\eta}{2^{n+2}}} \leq C_{n,m} \, \frac{1}{1-\eta}\cdot{\frac{1}{\abs{R}} \int_R \abs{f - P_Rf}},
\end{equation*}
which, by Theorem \ref{ofsdp}, completes the proof.
\end{proof}

\begin{proof}[Proof of Theorem \ref{genbanachx}]
Fix a cube $Q\in {\mathcal Q}$. By the main hypothesis of Theorem \ref{genbanachx} combined with Proposition \ref{sparsepolyn},
there exists a $\frac12$-sparse family ${\mathcal F}\subset \mc{D}(Q)$ such that
\begin{equation}\label{fPQ}
|f-P_Qf|\le C_{n,m} \sum_{R\in {\mathcal F}}a(R)\chi_R.
\end{equation}

Write $\mc{F} = \bigcup_{k=0}^\infty \mc{F}_{k}$, where $\mc{F}_k$ is as in the definition of a contracting family of dyadic cubes. Since $\mc{F} $ is $\frac12$-sparse, we have for any $k \in \N\cup \cbrace{0}$
  \begin{equation*}
    \sum_{R \in \mc{F}_{k}} \abs{R} \leq \frac1{2^k} \abs{Q},
  \end{equation*}
which, along with the $\varphi$-smallness preserving condition, implies
\begin{align*}
\nrmb{\sum_{R \in \mc{F}}a(R)\chi_R}_{X_Q}&\le \sum_{k=0}^\infty \nrmb{\sum_{R \in \mc{F}_k}a(R)\chi_R}_{X_Q} \le a(Q) \,\sum_{k=0}^{\infty} \varphi\ha{2^{-k}}.
\end{align*}
Combined with \eqref{fPQ}, this implies
$$\|(f-P_Qf)\chi_Q\|_{X_Q}\le C_{n,m} \, a(Q) \,\sum_{k=0}^{\infty} \varphi\ha{2^{-k}}.$$
The result now follows by noting $\sum_{k=1}^{\infty} \varphi\ha{2^{-k}} \leq \int_0^1 \varphi(t) \tfrac{\ddn t}{t}$.
\end{proof}

\begin{remark}\label{quasibanach}
Theorem \ref{genbanachx} remains true for quasi-Banach function norms. In this case one has to replace $\int_0^1 \varphi(t) \tfrac{\ddn t}{t}$ by $\hab{\int_0^1 \varphi(t)^r \tfrac{\ddn t}{t}}^{1/r}$, where $r\in (0,1)$ is the exponent in the Aoki--Rolewicz theorem (see \cite{KPR84}).
\end{remark}

\begin{remark}\label{MQ} One can replace $\|(f-P_Qf)\chi_Q\|_{X_Q}$ in the left-hand side of the conclusion of Theorem \ref{genbanachx} by $\|M_Q(f-P_Qf)\|_{X_Q}$, where $M_Q$ is the local maximal operator given by
\begin{equation*}
  M_Qf := \sup_{P \in \mc{D}(Q)} \ip{f}_{1,P} \chi_P.
\end{equation*}
Indeed, one can make a similar change in Proposition \ref{sparsepolyn} by using
\begin{align*}
f_Q &:= M_Q(f-P_Qf),\\
f_{R,Q} &:= f_R-f_Q
\end{align*}
 in the proof. The usage of Chebyshev's inequality is in this case replaced by the weak $L^1$-boundedness of $M_Q$.
\end{remark}

Using Remark \ref{MQ}, one can recover e.g. the first main result of \cite{CP21}. For a weight $w$, a cube $Q \in \mc{Q}$ and $r>0$ denote
$$
w_r(Q) := \abs{Q}^{1/r'} \has{\int_Qw^r}^{1/r}.
$$
Furthermore, for $f \in L^1_{\loc}(\R^n)$ define the polynomial sharp maximal function as
$$
M^{\sharp}_mf(x):= \sup_{Q \ni x} \frac{1}{\abs{Q}} \int_Q\abs{f-P_Qf}.
$$

\begin{cor}[\cite{CP21}]
Let $f \in L^1_{\loc}(\R^n)$, let $w$ be a weight, take $p \in [1,\infty)$ and $r \in (1,\infty)$. For any cube $Q \in \mc{Q}$ we have
\begin{equation*}
\has{\frac{1}{w_r(Q)} \int_Q \has{\frac{M_Q(f-P_Qf)}{M_m^{\sharp}f}}^pw}^{1/p} \leq C_{n,m}\, pr'
\end{equation*}
\end{cor}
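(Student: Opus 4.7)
\emph{Proof plan.} The idea is to combine the $M_Q$-variant of Proposition~\ref{sparsepolyn} supplied by Remark~\ref{MQ} with an exponential integrability estimate for the overlap function of a sparse family, and then pass to the weighted bound via H\"older. Applying the said variant with $\eta=\tfrac12$, I obtain a $\tfrac12$-sparse family $\mathcal F\subset \mathcal D(Q)$ and a constant $C_{n,m}>0$ such that for all $x\in Q$,
\begin{equation*}
  M_Q(f-P_Qf)(x)\le C_{n,m}\sum_{R\in \mathcal F} a(R)\,\chi_R(x),
\end{equation*}
with $a(R):=\tfrac{1}{|R|}\int_R|f-P_Rf|$. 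Since $a(R)\le M_m^{\sharp}f(y)$ for every $y\in R$, one has $a(R)\le \inf_R M_m^{\sharp}f\le M_m^{\sharp}f(x)$ whenever $x\in R$. Dividing through by $M_m^{\sharp}f(x)$ then yields the key pointwise estimate
\begin{equation*}
  \frac{M_Q(f-P_Qf)(x)}{M_m^{\sharp}f(x)}\le C_{n,m}\, N(x),\qquad x\in Q,
\end{equation*}
where $N(x):=\#\{R\in\mathcal F:\, x\in R\}$ is the overlap function of~$\mathcal F$.

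Next, I would control the size of $N$ on $Q$ by the sparse structure. Writing $\mathcal F=\bigcup_{k\ge 0}\mathcal F_k$ with $\mathcal F_0=\{Q\}$ and $\Omega_k:=\bigcup_{R\in\mathcal F_k}R$, the $\tfrac12$-sparseness gives $|\Omega_{k+1}|\le\tfrac12|\Omega_k|$ and hence $|\{N>k\}|=|\Omega_k|\le 2^{-k}|Q|$. A layer-cake computation combined with Stirling's formula then yields
\begin{equation*}
  \nrm{N}_{L^s(Q,\,dx/|Q|)}\le Cs,\qquad s\ge 1,
\end{equation*}
for an absolute constant~$C$.

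Finally, with $g:=M_Q(f-P_Qf)/M_m^{\sharp}f$, H\"older's inequality with exponents~$r'$ and~$r$ gives
\begin{equation*}
  \int_Q g^p\,w\le \has{\int_Q g^{pr'}}^{1/r'}\!\has{\int_Q w^r}^{1/r}.
\end{equation*}
Dividing by $w_r(Q)=|Q|^{1/r'}\ha{\int_Q w^r}^{1/r}$ and taking $p$-th roots yields
\begin{equation*}
  \has{\frac{1}{w_r(Q)}\int_Q g^p\,w}^{1/p}\le \has{\frac{1}{|Q|}\int_Q g^{pr'}}^{1/(pr')}=\nrm{g}_{L^{pr'}(Q,\,dx/|Q|)}\le C_{n,m}\,pr',
\end{equation*}
the last inequality following by combining the first two paragraphs with $s=pr'\ge 1$.

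The only mildly technical point is the justification of the $M_Q$-variant of Proposition~\ref{sparsepolyn} as asserted in Remark~\ref{MQ}: the $\ell^1$-condition for $f_Q:=M_Q(f-P_Qf)$, $f_{R,Q}:=f_R-f_Q$ is a telescoping identity, and the required sharp-maximal bound comes from the weak-$(1,1)$ boundedness of $M_Q$ in place of that of the standard maximal operator $M$ used in the proof of Proposition~\ref{sparsepolyn}.
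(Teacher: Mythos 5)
Your proof is correct, and it reaches the stated bound $C_{n,m}\,pr'$, but it is organised differently from the paper's argument. The paper deduces the corollary as a direct instance of Theorem \ref{genbanachx} (combined with Remark \ref{MQ}): it hides the weight and the division by $M_m^{\sharp}f$ inside the Banach function norm $\nrm{g}_{X_Q}=\hab{\frac{1}{w_r(Q)}\int_Q(\abs{g}/M_m^{\sharp}f)^pw}^{1/p}a(Q)$, verifies that $a$ is $\varphi$-smallness preserving with $\varphi(t)=t^{1/(pr')}$ (using exactly your two observations: $a(Q_j)\le\inf_{Q_j}M_m^{\sharp}f$ and H\"older with exponents $r',r$), and then the factor $pr'$ emerges from $\int_0^1\varphi(t)\frac{\ddn t}{t}+\varphi(1)=pr'+1$. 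You instead bypass Theorem \ref{genbanachx} entirely: starting from the pointwise sparse bound of Remark \ref{MQ}, you divide by $M_m^{\sharp}f$ to reduce everything to the overlap function $N=\sum_{R\in\mc{F}}\chi_R$, invoke its exponential integrability (this is precisely Proposition \ref{distr}, giving $\nrm{N}_{L^s(Q,\ddn x/\abs{Q})}\lesssim s$), and perform the H\"older step in $(r',r)$ at the very end to strip off the weight. The two routes use the same three ingredients — the sparse domination, $a(R)\le\inf_R M_m^{\sharp}f$, and H\"older in $r$ — and the same source of the constant (the geometric decay $\abs{\Omega_k}\le 2^{-k}\abs{Q}$, which for you gives $\nrm{N}_{L^{pr'}}\lesssim pr'$ and for the paper gives $\sum_k 2^{-k/(pr')}\eqsim pr'$). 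What your version buys is a self-contained, more transparent derivation that makes the John--Nirenberg-type mechanism behind the $pr'$ explicit; what the paper's version buys is that the corollary is exhibited as a special case of the general smallness-preserving framework, so no new sparse computation is needed. Your final caveat about Remark \ref{MQ} is also exactly the point the paper leaves to the reader, so you are on equal footing there.
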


\begin{proof}
This follows directly from Theorem \ref{genbanachx} combined with Remark \ref{MQ} using the choices
\begin{align*}
a(Q) &= \frac{1}{\abs{Q}} \int_Q\abs{f-P_Qf},\\
\nrm{g}_{X_Q} &=  \has{\frac{1}{w_r(Q)} \int_Q \has{\frac{\abs{g}}{M_m^{\sharp}f}}^pw}^{1/p} \cdot a(Q).
\end{align*}
Indeed,  for any $Q \in \mc{Q}$ and any family of pairwise disjoint $\cbrace{Q_j} \subset \mc{D}(Q)$ we have by H\"older's inequality
\begin{align*}
\nrmb{\sum_{j}a(Q_j)\chi_{Q_j}}_{X_Q}&\leq \has{\frac{1}{w_r(Q)}  \sum_{j}\int_{Q_j} w }^{1/p}\cdot a(Q) \\&\leq
\has{\frac{\sum_{j}\abs{Q_j}}{\abs{Q}}}^{\frac{1}{pr'}}\cdot a(Q).
\end{align*}
so $a$ satisfies the $\varphi$-smallness preserving condition with $\varphi(t) = t^{\frac{1}{pr'}}$.
\end{proof}

\section{Tent spaces}\label{sec:tent}
As our second new application, we will use our sparse domination principle to prove the main relation between two basic operators in the theory of tent spaces.

Let ${\mathbb R}^{n+1}_+=\{(y,t): y\in {\mathbb R}^n, t>0\}$ and, given $\a>0$, let $\G_{\a}(x)$ denote the cone in ${\mathbb R}^{n+1}_+$
with vertex in $x\in {\mathbb R}^n$ of aperture $\a$, i.e.
$$\G_\alpha(x)=\{(y,t)\in {\mathbb R}^{n+1}_+:|x-y|<\a t\}.$$
Given a ball $B=B(x,r)$ in ${\mathbb R}^n$, denote the tent over $B$ by
$$\widehat B=\{(y,t)\in {\mathbb R}^{n+1}_+:|x-y|+t<r\}.$$
For a measurable function $f\colon {\mathbb R}^{n+1}_+ \to \R$ define
\begin{align*}
  A^{(\a)}(f)(x)&:=\has{\int_{\G_{\a}(x)}|f(y,t)|^2\frac{\ddn y\ddn t}{t^{n+1}}}^{1/2}, &&x \in \R^n,\\
  C(f)(x)&:=\sup_{B\ni x}\has{\frac{1}{|B|}\int_{\widehat B}|f(y,t)|^2\frac{\ddn y\ddn t}{t}}^{1/2}, &&x \in \R^n,
\end{align*}
where the supremum is taken over all balls $B\subset \R^n$ containing $x$.

In \cite{CMS85}, Coifman, Meyer and Stein defined the tent space $T^{p}_\alpha$ for $p\in (0,\infty)$ and $\alpha>0$ as the space of all measurable $f\colon {\mathbb R}^{n+1}_+ \to \R$ such that
$$\|f\|_{T^{p}_\alpha}:= \nrm{A^{(\alpha)}(f)}_{L^p(\R^n)}<\infty.$$
It was shown in \cite{CMS85} that $T^{p}_\alpha = T^{p}_\beta$ for $\alpha,\beta>0$ and thus it suffices to study $T^{p}:=T^{p}_1$. Furthermore, they deduced
\begin{align}
 \label{eq:TunderC} \nrm{f}_{T^p} &\lesssim \nrm{C(f)}_{L^p(\R^n)} && p \in (0,\infty),\\
 \label{eq:CunderT} \nrm{C(f)}_{L^p(\R^n)} &\lesssim \nrm{f}_{T^p}  && p \in (2,\infty).
\end{align}
To prove these inequalities, it is useful to define a truncated version of $A^{(\alpha)}$, i.e. for $h>0$ set
\begin{align*}
  A^{(\a)}_h(f)(x)&:=\has{\int_{0}^h\int_{\abs{x-y}<\alpha t}|f(y,t)|^2\frac{\ddn y\ddn t}{t^{n+1}}}^{1/2}, &&x \in \R^n,
\end{align*}
and note that, using Fubini's theorem, we can reformulate $C(f)(x)$ for $x \in \R^n$ as follows
\begin{align}
\notag C(f)(x) &\eqsim \sup_{x \ni B} \has{\frac{1}{\abs{B}} \int_{0}^{r(B)} \int_{B} \abs{f(y,t)}^2 \frac{\abs{B(y,\alpha t)}}{t^n} \frac{\ddn y \ddn t}{t}}\\
\label{eq:reformC} &\eqsim  \sup_{x \ni B} \has{\frac{1}{\abs{B}} \int_B \int_0^{r(B)} \int_{\abs{y-z} \leq \alpha t}\abs{f(y,t)}^2 \frac{\ddn y \ddn t}{t^{n+1}}\dd z}^{1/2}\\
\notag&= \sup_{x \ni B} \has{\frac{1}{\abs{B}} \int_B A_{r(B)}^{(\alpha)}(f)(z)^2 \dd z}^{1/2},
\end{align}
where $r(B)$ denotes the radius of the ball $B$ and the implicit constants depend on $\alpha>0$.

From \eqref{eq:reformC} it is clear that $C(f)^2 \lesssim M(A(f)^2)$, which directly implies \eqref{eq:CunderT} by the boundedness of the maximal operator.
We will give a ``sparse" proof of the converse in \eqref{eq:TunderC}.

\begin{theorem}\label{theorem:sparsetent} Take $\alpha>0$ and let $f \colon \R^{n+1}_+ \to \R$ be measurable.  For every cube $Q\in \mc{Q}$ there exists a $\frac{1}{2}$-sparse family ${\mathcal F}\subset {\mathcal D}(Q)$ such that for a.e. $x \in Q$,
$$A_{\ell_Q}^{(\alpha)}(f)(x)\lesssim \has{\sum_{P\in {\mathcal F}} \frac{1}{\abs{P}} \int_P A_{\ell_P}^{(4\alpha+\sqrt{n})}(f)^2\cdot \chi_P(x)}^{1/2}.$$
\end{theorem}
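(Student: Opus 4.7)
The plan is to apply Theorem~\ref{ofsdp} with $r=2$ and $\eta=\tfrac12$, using the natural assignment
\begin{align*}
  f_Q(x) &:= A^{(\alpha)}_{\ell_Q}(f)(x), \\
  f_{P,Q}(x) &:= \has{\int_{\ell_P}^{\ell_Q}\int_{|x-y|<\alpha t} |f(y,t)|^2 \tfrac{\ddn y \, \ddn t}{t^{n+1}}}^{1/2},
\end{align*}
for $Q \in \mc{Q}$ and $P \in \mc{D}(Q)$. Because the $t$-intervals $(0,\ell_P)$ and $(\ell_P,\ell_Q)$ partition $(0,\ell_Q)$, the pointwise identity $f_Q^2 = f_P^2 + f_{P,Q}^2$ holds; telescoping it over a chain $P_m \subset \dots \subset P_1$ immediately produces the $\ell^2$-condition with $C_2 = 1$, and \eqref{cond} is trivial since $|f_{P,Q}|\le |f_Q|$.

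The hard part will be controlling the sharp maximal function $m^{\#}_P f$. I would exploit the fact that $f_{P',P}$ is truncated below at $t = \ell_{P'}$, so that for any $P' \in \mc{D}(P)$ and any $x,x'\in P'$, the inequality $|x-x'|\le \sqrt{n}\,\ell_{P'} \le \sqrt{n}\,t$ holds on the defining $t$-range. Then, for $(y,t)$ with $\ell_{P'}<t<\ell_P$ and $|x'-y|<\alpha t$,
\[
|x-y| \le |x-x'|+|x'-y| < (\alpha+\sqrt{n})\,t,
\]
which gives $f_{P',P}(x') \le A^{(\alpha+\sqrt{n})}_{\ell_P}(f)(x)$. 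Since $f_{P',P}\ge 0$, this yields $\osc(f_{P',P};P') \le 2A^{(\alpha+\sqrt{n})}_{\ell_P}(f)(x)$, so
\[
m^{\#}_P f(x) \lesssim A^{(\alpha+\sqrt{n})}_{\ell_P}(f)(x) \le A^{(4\alpha+\sqrt{n})}_{\ell_P}(f)(x), \qquad x \in P.
\]

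With these inputs, Theorem~\ref{ofsdp} delivers a $\tfrac12$-sparse family $\mc{F} \subset \mc{D}(Q)$ such that $A^{(\alpha)}_{\ell_Q}(f)(x) \lesssim \bigl(\sum_{P \in \mc{F}} \gamma_P^2\, \chi_P(x)\bigr)^{1/2}$, where $\gamma_P = (f_P\chi_P)^*(|P|/2^{n+3}) + (m^{\#}_P f)^*(|P|/2^{n+3})$. Applying Chebyshev's inequality to the squares of the rearranged quantities at level $|P|/2^{n+3}$ then gives
\[
\gamma_P^2 \lesssim \frac{1}{|P|}\int_P A^{(\alpha)}_{\ell_P}(f)^2 + \frac{1}{|P|}\int_P A^{(\alpha+\sqrt{n})}_{\ell_P}(f)^2 \lesssim \frac{1}{|P|}\int_P A^{(4\alpha+\sqrt{n})}_{\ell_P}(f)^2,
\]
which yields the claimed bound. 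I expect the only genuine obstacle to be the sharp maximal function estimate above: the cone-translation trick works precisely because $f_{P',P}$ has been localised to $t>\ell_{P'}$, where $t$ dominates the diameter of $P'$ and the shift by this diameter only inflates the aperture by $\sqrt{n}$.
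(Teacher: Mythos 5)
Your proof is correct, and its skeleton coincides with the paper's: both apply Theorem~\ref{ofsdp} with $r=2$ to truncated conical square functions, verify the $\ell^2$-condition by exact telescoping in the $t$-variable, and finish with Chebyshev's inequality \eqref{rearch} for $\delta=2$ applied to both terms of $\gamma_P$. Where you genuinely diverge is the control of $m^{\#}_Pf$. The paper replaces the sharp indicator $\chi_{\{|x-y|<\alpha t\}}$ by a smooth aperture cutoff $\Phi\bigl(\tfrac{x-y}{\alpha t}\bigr)$ (this is why $f_Q$ only satisfies $A^{(\alpha)}_{\ell_Q}(f)\le f_Q\le A^{(2\alpha)}_{\ell_Q}(f)$ and why $4\alpha$ appears in the statement), and then estimates $|f_{R,P}(\xi)-f_{R,P}(\eta)|$ via the reverse triangle inequality in $L^2$ together with the Lipschitz bound $\bigl|\Phi\bigl(\tfrac{\xi-y}{\alpha t}\bigr)-\Phi\bigl(\tfrac{\eta-y}{\alpha t}\bigr)\bigr|\lesssim |\xi-\eta|/(\alpha t)$, summed over the dyadic shells $2^{k-1}\ell_R<t<2^k\ell_R$; this exhibits genuine smallness of the oscillation, proportional to $|\xi-\eta|/\ell_R$ at each scale. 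You keep the sharp cutoff and instead bound $\osc(f_{R,P};R)$ crudely by twice its supremum over $R$, using the cone inclusion forced by the lower truncation: for $t>\ell_R$ one has $|x-x'|\le\sqrt{n}\,\ell_R\le\sqrt{n}\,t$, so $\Gamma_\alpha(x')\cap\{\ell_R<t<\ell_P\}\subset\Gamma_{\alpha+\sqrt{n}}(x)\cap\{t<\ell_P\}$. Both routes yield the pointwise bound $m^{\#}_Pf\lesssim A^{(4\alpha+\sqrt{n})}_{\ell_P}(f)$ on $P$, which is all that Theorem~\ref{ofsdp} requires, so your shortcut is perfectly adequate (and even delivers the slightly smaller aperture $\alpha+\sqrt{n}$). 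The paper's smoothing is the more robust change-of-aperture device and transfers verbatim to the vector-valued setting of Theorem~\ref{theorem:sparsetentvector} via the $L^\infty$-multiplier property of the $\gamma$-norm, but for the scalar statement your more elementary argument is a clean simplification.
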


Combining Theorem \ref{theorem:sparsetent} with \eqref{eq:reformC}, we obtain for $\delta \in (0,2]$
and $g \in L^{p'}(\R^n)$,
\begin{align*}
\int_Q A_{\ell_Q}^{(1)}(f)^{\delta}g&\lesssim \sum_{P\in {\mathcal F}}\has{\frac{1}{|P|} \int_P A_{\ell_P}^{(4+\sqrt{n})}(f)^2}^{\d/2}\int_P g\\
&\lesssim \sum_{P\in {\mathcal F}}\int_{E_P}(C(f))^{\delta}\cdot Mg \\&\lesssim\|C(f)^{\delta}\|_{L^p(\R^n)}\|g\|_{L^{p'}(\R^n)}.
\end{align*}
By duality and the monotone convergence theorem, this yields \eqref{eq:TunderC}.

\begin{proof}[Proof of Theorem \ref{theorem:sparsetent}] Let $\Phi$ be a smooth function such that
$\chi_{B(0,1)}\le \Phi\le \chi_{B(0,2)}$ and for $Q \in \mc{Q}$ define
\begin{equation*}
  f_Q(x):= \has{\int_0^{\ell_Q} \int_{\R^n} \abs{f(y,t)}^2 \cdot \Phi\hab{\tfrac{x-y}{\alpha t}}^2
  \frac{\ddn y\ddn t}{t^{n+1}}}^{1/2}, \qquad x \in Q.
\end{equation*}
Observe that $A^{(\alpha)}_{\ell^Q}(f) \leq f_Q \leq A^{(2\alpha)}_{\ell^Q}(f)$. For $P \in \mc{D}(Q)$ set
\begin{equation*}
  f_{P,Q}(x):= \has{\int_{\ell_P}^{\ell_Q} \int_{\R^n} \abs{f(y,t)}^2 \cdot \Phi\hab{\tfrac{x-y}{\alpha t}}^2
  \frac{\ddn y\ddn t}{t^{n+1}}}^{1/2},  \qquad x \in P.
\end{equation*}
The family $\{f_Q,f_{P,Q}\}_{Q \in \mc{Q}, P \in \mc{D}(Q)}$ trivially satisfies the $\ell^2$-condition with $C_2=1$, and condition (\ref{cond}) holds as well.
Therefore, by Theorem \ref{ofsdp}, there exists a $\frac{1}{2}$-sparse family ${\mathcal F}\subset {\mathcal D}(Q)$ such that for a.e. $x \in Q$,
\begin{equation}\label{eq:sparsedomtent}
  A^{(\alpha)}_{\ell_Q}(f)(x)\le f_Q(x)\lesssim \has{\sum_{P\in {\mathcal F}}\ga_P^2\chi_P(x)}^{1/2},
\end{equation}
where
$$\ga_P=(f_{P}\chi_{P})^*(|P|/2^{n+3})+(m^{\#}_Pf)^*(|P|/2^{n+3}).$$

We start by analysing $m_P^{\#}f$. Fix $P \in \mc{F}$ and $x \in P$. Let $R\in {\mathcal D}(P)$ be such that $x\in \R$ and take $N \in \N$ such that $2^N\ell_R=\ell_P$. We have for $\xi,\eta \in R$
\begin{align*}
|f_{R,P}(\xi)&-f_{R,P}(\eta)|\\& \leq  \has{\int_{\ell_R}^{\ell_P} \int_{\R^n} \abs{f(y,t)}^2\cdot \hab{ \Phi\hab{\tfrac{\xi-y}{\alpha t}}-\Phi\hab{\tfrac{\eta-y}{\alpha t}}}^2 \frac{\ddn y\ddn t}{t^{n+1}}}^{1/2}\\
&\le\sum_{k=1}^N\has{\int_{2^{k-1}\ell_R}^{2^{k}\ell_R} \int_{\R^n}\abs{f(y,t)}^2\cdot \hab{ \Phi\hab{\tfrac{\xi-y}{\alpha t}}-\Phi\hab{\tfrac{\eta-y}{\alpha t}}}^2 \frac{\ddn y\ddn t}{t^{n+1}}}^{1/2}\\
&\lesssim \sum_{k=1}^N \frac{\abs{\xi-\eta}}{ \alpha 2^k \ell_R}\has{\int_{2^{k-1}\ell_R}^{2^{k}\ell_R} \int_{\abs{x-y} \leq 4 \alpha t+\sqrt{n} \ell_R}\abs{f(y,t)}^2 \frac{\ddn y\ddn t}{t^{n+1}}}^{1/2}\\
&\lesssim \sum_{k=1}^N \frac{1}{2^k}\has{\int_0^{\ell_P} \int_{\abs{x-y}<(4\alpha+\sqrt{n}) t}\abs{f(y,t)}^2 \frac{\ddn y\ddn t}{t^{n+1}}}^{1/2}\\
&\leq  A_{\ell_P}^{(4\alpha+\sqrt{n})}f(x).
\end{align*}
Therefore $m_P^{\#}f(x) \lesssim A_{\ell_P}^{(4\alpha+\sqrt{n})}f(x)$ for $x \in P$. Since we already noted that $f_P \leq A^{(2\alpha)}_{\ell^P}f$, we obtain by (\ref{rearch}),
\begin{align*}
 \gamma_P &\leq  \has{\frac{2^{n+3}}{\abs{P}} \int_P f_P(z)^2\dd z}^{1/2}+\has{\frac{2^{n+3}}{\abs{P}} \int_P m_P^{\#}f(z)^2\dd z}^{1/2} \\&\lesssim \has{\frac{1}{\abs{P}} \int_P A_{\ell_P}^{(4\alpha+\sqrt{n})}(f)(z)^2 \dd z}^{1/2}.
\end{align*}
Combined with \eqref{eq:sparsedomtent}, this finishes the proof.
\end{proof}

\subsection{An improved good-\texorpdfstring{$\la$}{lambda} inequality}
The estimate \eqref{eq:TunderC} was shown in \cite{CMS85} using the equivalence of tent spaces with different apertures and the following good-$\lambda$ estimate:  there exists a fixed $\a>1$
and a constant $c>0$ so that for all $\la>0$ and $0<\ga\le 1$,
\begin{equation}\begin{aligned}\label{eq:goodlambda}
  \absb{\{x\in \R^n:A(f)(x)>2\la, \,&C(f)(x)\le \ga \la\}}\\&\le c\,\ga^2 \absb{\{x \in \R^n:A^{(\a)}(f)(x)>\la\}}.
\end{aligned}
\end{equation}
where we abbreviated $A(f):=A^{(1)}(f)$.
Using Theorem \ref{theorem:sparsetent}, we can show that the quadratic dependence on $\ga$ in (\ref{eq:goodlambda}) can be improved to quadratic exponential dependence.

\begin{theorem}\label{theorem:exp}Let $f \colon \R^{n+1}_+ \to \R$ be measurable.
There exist constants $\a>1$ and $c>0$ so that for all $\la>0$ and $0<\ga\le 1$,
\begin{align*}
  \absb{\{x\in \R^n:A(f)(x)>2\la,\,& C(f)(x)\le \ga \la\}}\\&\le 2e^{-c/\ga^2}\absb{\{x\in \R^n:A^{(\a)}f(x)>\la\}}.
\end{align*}
\end{theorem}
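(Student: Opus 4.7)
The plan is to combine a Whitney decomposition of $E_\lambda := \{A^{(\alpha)}(f)>\lambda\}$ with the sparse domination of Theorem~\ref{theorem:sparsetent}, using the sparseness to upgrade the polynomial $\gamma^2$-loss of \eqref{eq:goodlambda} to the exponential factor $e^{-c/\gamma^2}$. Fix $\alpha>1$ and a small $c_n\in(0,1]$ to be chosen. Since $E_\lambda$ is open, decompose it into a Whitney family $\{Q_j\}$ with $\ell(Q_j)\sim\operatorname{dist}(Q_j,E_\lambda^c)$, and for each $j$ pick a nearby point $x_j\notin E_\lambda$, so $A^{(\alpha)}(f)(x_j)\le\lambda$. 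If $\alpha$ is large enough relative to $c_n$ and the Whitney constant, then for every $x\in Q_j$ and $(y,t)\in\Gamma^{(1)}(x)$ with $t\ge c_n\ell(Q_j)$ we have $|y-x_j|\le|y-x|+|x-x_j|\le t+C_W\ell(Q_j)\le\alpha t$, so the tail $\Gamma^{(1)}(x)\cap\{t\ge c_n\ell(Q_j)\}$ lies inside $\Gamma^{(\alpha)}(x_j)$. Consequently its contribution to $A(f)(x)^2$ is at most $A^{(\alpha)}(f)(x_j)^2\le\lambda^2$, and on the set
$$E := \{x\in Q_j:A(f)(x)>2\lambda,\ C(f)(x)\le\gamma\lambda\}$$
we obtain $A_{c_n\ell(Q_j)}^{(1)}(f)(x)^2>3\lambda^2$.

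Next, apply Theorem~\ref{theorem:sparsetent} to $Q_j$ to produce a $\tfrac12$-sparse family $\mathcal{F}\subset\mathcal{D}(Q_j)$ such that for a.e.~$x\in Q_j$
$$A_{c_n\ell(Q_j)}^{(1)}(f)(x)^2\le A_{\ell_{Q_j}}^{(1)}(f)(x)^2\lesssim \sum_{P\in\mathcal{F}}\frac{1}{|P|}\int_P A_{\ell_P}^{(4+\sqrt{n})}(f)^2\chi_P(x).$$
For any $P\in\mathcal{F}$ meeting $E$, pick a $y_P\in P\cap E$, for which $C(f)(y_P)\le\gamma\lambda$. Choosing a ball $B\supset P$ of radius $\sim\ell(P)$ and invoking the reformulation~\eqref{eq:reformC} of $C(f)$ at $y_P$ yields
$$\frac{1}{|P|}\int_P A_{\ell_P}^{(4+\sqrt{n})}(f)^2\lesssim C(f)(y_P)^2\le\gamma^2\lambda^2.$$
Setting $N(x):=\#\{P\in\mathcal{F}:x\in P\}$, we conclude that for every $x\in E$,
$$3\lambda^2<A_{c_n\ell(Q_j)}^{(1)}(f)(x)^2\le C_0\gamma^2\lambda^2\cdot N(x),$$
so $N(x)\ge k_0$ with $k_0:=\lceil 3/(C_0\gamma^2)\rceil$.

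Finally, since $\mathcal{F}=\bigcup_k\mathcal{F}_k$ is a $\tfrac12$-sparse contracting family, the sets $\Omega_k:=\bigcup_{P\in\mathcal{F}_k}P$ satisfy $|\Omega_k|\le 2^{-k}|Q_j|$, and the cubes of $\mathcal{F}$ containing a fixed point form a chain across generations, so $\{N\ge k\}\subset\Omega_{k-1}$. Therefore
$$|E|\le|\{x\in Q_j:N(x)\ge k_0\}|\le 2^{1-k_0}|Q_j|\le 2e^{-c/\gamma^2}|Q_j|$$
with $c:=(3\ln 2)/C_0$. Since $\{A(f)>2\lambda\}\subset E_\lambda=\bigsqcup_j Q_j$, summing over $j$ yields
$$|\{A(f)>2\lambda,\ C(f)\le\gamma\lambda\}|=\sum_j|E|\le 2e^{-c/\gamma^2}\sum_j|Q_j|=2e^{-c/\gamma^2}|E_\lambda|,$$
as claimed. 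The main technical obstacle is the Whitney-cone bookkeeping: one must calibrate $c_n$ and $\alpha$ so that the tail of $\Gamma^{(1)}(x)$ genuinely embeds into $\Gamma^{(\alpha)}(x_j)$, and verify that the aperture $4+\sqrt{n}$ returned by Theorem~\ref{theorem:sparsetent} is still controlled by $C(f)(y_P)$ through \eqref{eq:reformC} at scale $\ell(P)$. Beyond these constant chases, the proof is a straightforward marriage of the classical Whitney/good-$\lambda$ scheme with the new sparse domination and an elementary exponential counting of sparse cubes.
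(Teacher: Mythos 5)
Your proposal is correct and follows essentially the same route as the paper: a Whitney decomposition of $\{A^{(\alpha)}(f)>\la\}$, absorption of the cone above the Whitney scale into $A^{(\a)}(f)(x_j)\le\la$ for a nearby point outside the level set, the sparse domination of Theorem~\ref{theorem:sparsetent} combined with \eqref{eq:reformC} to bound each sparse average by $\ga^2\la^2$, and the exponential decay of the overlap function of a $\tfrac12$-sparse family (your direct counting of $\{N\ge k_0\}\subset\O_{k_0-1}$ is exactly Proposition~\ref{distr}). The only differences are cosmetic: you split the cone at $c_n\ell(Q_j)$ and use a Pythagorean decomposition of $A(f)^2$, whereas the paper splits at $\ell(Q_j)$ via the triangle inequality with the explicit aperture $\a=5\sqrt{n}+1$.
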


Before proving this lemma, we establish the following simple proposition.

\begin{prop}\label{distr} Let $Q \in \mc{Q}$ and let ${\mathcal F}\subset {\mathcal D}(Q)$ be an $\eta$-sparse family. Then we have for any $\alpha>0$
$$\absb{\cbraceb{x\in Q:\sum_{P\in {\mathcal F}}\chi_{P}(x)>\a}}\le \frac{1}{1-\eta}e^{-(\log\frac{1}{1-\eta})\a}|Q|.$$
\end{prop}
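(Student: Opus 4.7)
The plan is to exploit the tree structure of a contracting family: since each $\mathcal{F}_k$ consists of pairwise disjoint cubes and $\Omega_{k+1}\subset\Omega_k$, every cube in $\mathcal{F}_{k+1}$ is contained in a unique cube in $\mathcal{F}_k$ (using that dyadic cubes with nontrivial intersection are nested). Hence for a.e.\ $x\in Q$, if we let $k(x)$ be the largest index with $x\in\Omega_{k(x)}$ (which is finite for a.e.\ $x$ since $|\Omega_k|\to 0$), then $x$ belongs to exactly one cube from each of $\mathcal{F}_0,\mathcal{F}_1,\dots,\mathcal{F}_{k(x)}$ and to no cube from any $\mathcal{F}_j$ with $j>k(x)$. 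Therefore $\sum_{P\in\mathcal{F}}\chi_P(x)=k(x)+1$, so
$$\cbraceb{x\in Q:\sum_{P\in\mathcal{F}}\chi_P(x)>\alpha}\subseteq \Omega_{\lfloor\alpha\rfloor}$$
up to a null set.

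Next I would convert $\eta$-sparseness into geometric decay of $|\Omega_k|$. By definition, for each $P\in\mathcal{F}_k$ we have $|E_P|\geq\eta|P|$ and the sets $E_P=P\setminus\Omega_{k+1}$ are pairwise disjoint, so
$$|\Omega_k|-|\Omega_{k+1}|=\sum_{P\in\mathcal{F}_k}|E_P|\geq \eta\sum_{P\in\mathcal{F}_k}|P|=\eta|\Omega_k|,$$
which gives $|\Omega_{k+1}|\le(1-\eta)|\Omega_k|$. Iterating from $\Omega_0=Q$ yields $|\Omega_k|\le(1-\eta)^k|Q|$ for every $k\ge 0$.

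Combining the two steps with $\lfloor\alpha\rfloor\ge\alpha-1$, I get
$$\absb{\cbraceb{x\in Q:\sum_{P\in\mathcal{F}}\chi_P(x)>\alpha}}\le (1-\eta)^{\lfloor\alpha\rfloor}|Q|\le \frac{1}{1-\eta}(1-\eta)^{\alpha}|Q|,$$
and rewriting $(1-\eta)^\alpha=e^{-(\log\frac{1}{1-\eta})\alpha}$ gives the claimed bound. There is essentially no obstacle here; the only mild care needed is to verify the tree structure (uniqueness of the ancestor of each cube in $\mathcal{F}_{k+1}$ inside $\mathcal{F}_k$) and to handle the null set where $x\in\bigcap_k\Omega_k$, both of which follow directly from the dyadic and contracting structure.
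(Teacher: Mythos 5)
Your proof is correct and follows essentially the same route as the paper: identify the super-level set $\{\sum_{P\in\mathcal F}\chi_P>\alpha\}$ with the appropriate $\Omega_k$ (using that the sum counts the generations containing $x$), combine with the geometric decay $|\Omega_k|\le(1-\eta)^k|Q|$, and lose a factor $\frac{1}{1-\eta}$ from rounding $\alpha$. The paper states the decay of $|\Omega_k|$ without proof, whereas you derive it from the disjointness of the sets $E_P$; that added detail is accurate.
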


\begin{proof} Write ${\mathcal F}=\cup_{k=0}^{\infty}{\mathcal F}_k$ as in the definition of a contracting family of cubes. By $\eta$-sparseness, we have $|\O_k|\le (1-\eta)^k|Q|$. Thus, it follows that
\begin{align*}
\absb{\cbraceb{x\in Q:\sum_{P\in {\mathcal F}}\chi_{P}(x)>\a}}&=\sum_{k=1}^{\infty}\abs{\O_{k-1}}\chi_{(k-1,k]}(\a)\\
&\le |Q|\sum_{k=1}^{\infty}(1-\eta)^{k-1}\chi_{(k-1,k]}(\a)\\&\le (1-\eta)^{\a-1}|Q|,
\end{align*}
which completes the proof.
\end{proof}

\begin{proof}[Proof of Theorem \ref{theorem:exp}]
As in \cite{CMS85}, we consider a Whitney decomposition $\cbrace{Q_j}_j$ of the open set $$\{x \in \R^n:A^{(\a)}(f)(x)>\la\},$$
where $\a>1$ will be chosen later on. Then it suffices to prove that for every $Q_j$,
\begin{equation}\label{localpart}
\absb{\{x\in Q_j:A(f)(x)>2\la, \,Cf(x)\le \ga \la\}}\le c_1e^{-c_2/\ga^2}|Q_j|.
\end{equation}

Define $f_{{Q_j}}(y,t):= f(y,t) \chi_{(\ell_{Q_j},\infty)}(t)$ and note that
$$A(f)\le A(f_{Q_j})+A_{\ell_{Q_j}}(f).$$
Consider $A(f_{Q_j})(x)$ for $x\in Q_j$.
By the properties of the Whitney cubes, there exist $z\in Q_j$ and $x_j\in \R^n$ such that $|z-x_j|\le 4\sqrt{n}\ell_{Q_j}$ and $A^{(\a)}(f)(x_j)\le \la$.
Hence, for $(y,t) \in \R^{n+1}_+$ with $|y-x|<t$ and $t\ge \ell_{Q_j}$, we obtain
\begin{align*}
|y-x_j|\le |y-x|+|x-z|+|z-x_j|
<t+5\sqrt{n}\ell_{Q_j} \le (5\sqrt{n}+1)t.
\end{align*}
Therefore, if $\a=5\sqrt{n}+1$, then $A(f_{Q_j})(x)\le A^{(\a)}(f)(x_j)\le \la$ for all $x\in Q_j$.
It follows that the left-hand side of (\ref{localpart}) is bounded by
\begin{equation}\label{Qj}
\absb{\{x\in Q_j:A_{\ell_{Q_j}}(f)(x)>\la,\, C(f)(x)\le \ga \la\}}.
\end{equation}

By Theorem \ref{theorem:sparsetent} and \eqref{eq:reformC}, there exists a $\frac{1}{2}$-sparse family ${\mathcal F}_j\subset {\mathcal D}(Q_j)$ such that
$$A_{\ell_{Q_j}}(f)(x)^2\lesssim C(f)(x)^2\sum_{P\in {\mathcal F}_j}\chi_P(x), \qquad x \in Q_j.$$
Combined with Proposition \ref{distr} this implies that the expression in (\ref{Qj}) is at most
$$
\absb{\{x\in Q_j:\sum_{P\in {\mathcal F}_j}\chi_P(x)\gtrsim 1/\ga^2\}}\le 2e^{-c/\ga^2}|Q_j|,
$$
which completes the proof of \eqref{localpart} and therefore of the theorem.
\end{proof}

\subsection{Vector-valued tent spaces}
Reinterpreting and extending the formulation of tent spaces by Harboure, Torrea and Viviani in \cite{HTV91},
Hyt\"onen, van Neerven and Portal \cite{HNP08} extended tent spaces to the the vector-valued setting. In this subsection we will point out how the arguments of the preceding subsection extend to this setting.

 In order to give the definition of these vector-valued tent spaces, we first need to introduce some notation. For a Banach space $X$ and a Hilbert space $H$, denote the space of $\gamma$-radonifying operators by $\gamma(H,X)\subseteq \mc{L}(X,H)$. For an introduction to these spaces we refer to \cite[Chapter~9]{HNVW17}.

For the remainder of this section, set $H:= L^2(\R^{n+1}_+,\frac{\ddn y\ddn t}{t^{n+1}})$. Then the space $\gamma(H,X)$ can be thought of as a square function space, since
 \begin{equation*}
  \gamma(H,L^p(\R^d)) = L^p(\R^d;L^2(\R^{n+1}_+,\tfrac{\ddn y\ddn t}{t^{n+1}})), \qquad p \in [1,\infty).
\end{equation*}
Let  $f \colon \R^{n+1}_+ \to X$ be strongly measurable. If $\ip{f,x^*} \in H$ for all $x^* \in X^*$, we can define the operator $I_f \in \mc{L}(H,X)$ by
\begin{equation*}
  I_f\varphi := \int_{\R^{n+1}_+} f(y,t) \varphi(y,t) \frac{\ddn y \ddn t}{t^{n+1}}, \qquad \varphi \in H,
\end{equation*}
where the integral is interpreted in the Pettis sense (see \cite[Theorem 1.2.37]{HNVW16}).
If $I_f \in \gamma(H,X)$, we write with slight abuse of notation $f \in \gamma(H,X)$ and $\nrm{f}_{\gamma(H,X)}:= \nrm{I_f}_{\gamma(H,X)}$. Moreover, if $I_f \notin \gamma(H,X)$ or $\ip{f,x^*} \notin H$ for some $x^* \in X^*$, we set $\nrm{f}_{\gamma(H,X)} = \infty$.

We are now ready to define the vector-valued tent spaces introduced in \cite{HNP08}. For $\alpha>0$ and a strongly measurable $f \colon \R^{n+1}_+ \to X$ define
\begin{align*}
  A^{(\alpha)}(f)(x)&:= \nrm{f\cdot \chi_{\Gamma_\alpha(x)}}_{\gamma(H,X)}, && x \in \R^n\\
  A^{(\alpha)}_h(f)(x)&:= \nrm{f\cdot \chi_{\Gamma_\alpha(x)} \cdot \chi_{\R^n \times (0,h)}}_{\gamma(H,X)}, &&x \in \R^n, \, h>0.
\end{align*}
 Since $\gamma(H,\R)=H$, this definition coincides with the scalar-valued definitions of $A^{(\alpha)}$ and $A^{(\alpha)}_h$. Therefore it makes sense to define $T^{p}_\alpha(X)$ as the completion of the space of all strongly measurable $f \colon \R^{n+1}_+ \to X$ such that
\begin{equation*}
  \nrm{f}_{T_\alpha^{p}(X)} := \nrm{A^{(\alpha)}(f)}_{L^p(\R^n)} <\infty.
\end{equation*}

It was shown in \cite[Theorem 4.3]{HNP08} that, as in the scalar case, $T_\alpha^{p}(X) = T_\beta^{p}(X)$ for $\alpha,\beta >0$ when $p \in (1,\infty)$ and $X$ has the so-called $\UMD$ property (see \cite[Chapter 4]{HNVW16}).

The scalar-valued definition of $C(f)$ does not make sense in the vector-valued setting. However, its reformulation using \eqref{eq:reformC} does. Following the work of Hyt\"onen and Weis \cite{HW10}, we slightly generalise this formulation. Fix $q \in (0,\infty)$, $\alpha>0$ and for a strongly measurable $f \colon \R^{n+1}_+ \to X$ define
\begin{equation*}
  C_q^{(\alpha)}(f)(x):=\sup_{B \ni x} \has{ \frac{1}{\abs{B}} \int_B A^{(\alpha)}_{r(B)}(f)^q }^{1/q},\qquad x \in \R^n.
\end{equation*}
If $X=\R$ and $q =2$, we have $C_2^{(\alpha)}(f) \eqsim C(f)$ by \eqref{eq:reformC}. 

The equivalence between $A^{(\alpha)}(f)$ and $C^{(\alpha)}(f)$ was proven in \cite[Theorem 4.4]{HW10}, using a vector-valued analogue of the good-$\lambda$ inequality \eqref{eq:goodlambda}.
Since this uses the equivalence of vector-valued tent spaces with different apertures, this result is limited to $p \in (1,\infty)$ and $\UMD$ Banach spaces.

As in the scalar-valued setting, we will give a ``sparse" proof the equivalence between $A^{(\alpha)}(f)$ and $C^{(\alpha)}(f)$. In the proof we will not use the equivalence of vector-valued tent spaces with different apertures, which allows us the treat $p \in (0,\infty)$ and arbitrary Banach spaces. The price we pay is that we have to increase the aperture of $C^{(\alpha)}(f)$. Of course, if $p \in (1,\infty)$ and $X$ has the $\UMD$ property, one can use the equivalence of vector-valued tent spaces with different apertures to recover \cite[Theorem 4.4]{HW10}.

We refer to \cite[Chapter 7]{HNVW17} for the definition of (Rademacher) type $r \in [1,2]$ with constant $\tau_{r,X}$ used in the following theorem. Let us note here that any Banach space has type $1$ with constant $\tau_{1,X}=1$.

\begin{theorem}\label{theorem:sparsetentvector} Let $X$ be a Banach space with type $r \in [1,2]$, take $q \in (0,\infty)$ and let $\alpha>0$. Let $f \colon \R^{n+1}_+ \to X$ be strongly measurable.  For every cube $Q\in \mc{Q}$ there exists a $\frac{1}{2}$-sparse family ${\mathcal F}\subset {\mathcal D}(Q)$ such that for a.e. $x \in Q$,
$$A_{\ell_Q}^{(\alpha)}(f)(x)\lesssim \tau_{r,X} \has{\sum_{P\in {\mathcal F}} \has{\frac{1}{\abs{P}} \int_P A_{\ell_P}^{(4\alpha+\sqrt{n})}(f)^q}^{r/q} \cdot \chi_P(x)}^{1/r}.$$
\end{theorem}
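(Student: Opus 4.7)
The strategy is to mimic the proof of Theorem~\ref{theorem:sparsetent}, but replace the scalar $L^2$-norm on the cone with the $\gamma(H,X)$-norm. The decisive new ingredient, which also explains the appearance of $\tau_{r,X}$ in the conclusion, is that disjointly supported functions in $\gamma(H,X)$ satisfy an $\ell^r$-estimate when $X$ has type $r$.

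I would begin by fixing a smooth cutoff $\Phi$ with $\chi_{B(0,1)}\le\Phi\le\chi_{B(0,2)}$ and setting, for $Q\in\mc{Q}$ and $P\in\mc{D}(Q)$,
\[
f_Q(x):=\nrmb{f(y,t)\,\Phi\hab{\tfrac{x-y}{\alpha t}}\chi_{(0,\ell_Q)}(t)}_{\gamma(H,X)},\qquad f_{P,Q}(x):=\nrmb{f(y,t)\,\Phi\hab{\tfrac{x-y}{\alpha t}}\chi_{[\ell_P,\ell_Q)}(t)}_{\gamma(H,X)},
\]
so that $A^{(\alpha)}_{\ell_Q}(f)\le f_Q\le A^{(2\alpha)}_{\ell_Q}(f)$. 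Condition \eqref{cond} is immediate from the triangle inequality for $\gamma$-norms. For the $\ell^r$-condition, fix $P_m\subset\cdots\subset P_1$ and split the integrand of $f_{P_1}$ according to the disjoint time intervals $(0,\ell_{P_m}),[\ell_{P_m},\ell_{P_{m-1}}),\ldots,[\ell_{P_2},\ell_{P_1})$. Because the resulting summands are supported on disjoint measurable sets, the type-$r$ property of $X$ yields
\[
f_{P_1}(x)\le \tau_{r,X}\has{f_{P_m}(x)^r+\sum_{k=1}^{m-1}f_{P_{k+1},P_k}(x)^r}^{1/r},
\]
so the family satisfies the $\ell^r$-condition with $C_r=\tau_{r,X}$. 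Applying Theorem \ref{ofsdp} with $\eta=\tfrac12$ produces a $\tfrac12$-sparse family $\mc{F}\subset\mc{D}(Q)$ with $A^{(\alpha)}_{\ell_Q}(f)(x)\lesssim \tau_{r,X}\hab{\sum_{P\in\mc{F}}\ga_P^r\chi_P(x)}^{1/r}$ for the usual $\ga_P$.

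It remains to dominate $\ga_P$ by an $L^q$-average of $A^{(4\alpha+\sqrt{n})}_{\ell_P}(f)$. Chebyshev's inequality \eqref{rearch} gives
\[
\ga_P\lesssim\has{\tfrac{1}{|P|}\int_P f_P^q}^{1/q}+\has{\tfrac{1}{|P|}\int_P (m_P^{\#}f)^q}^{1/q},
\]
and $f_P\le A^{(2\alpha)}_{\ell_P}(f)$ pointwise. The bound $m_P^{\#}f(x)\lesssim A^{(4\alpha+\sqrt{n})}_{\ell_P}(f)(x)$ is obtained exactly as in the scalar proof: for $R\in\mc{D}(P)$ with $x\in R$ and $\xi,\eta\in R$, triangle inequality and a dyadic decomposition of $[\ell_R,\ell_P)$ in the time variable reduce the estimate to
\[
\nrmb{f(y,t)\hab{\Phi\ha{\tfrac{\xi-y}{\alpha t}}-\Phi\ha{\tfrac{\eta-y}{\alpha t}}}\chi_{[2^{k-1}\ell_R,2^k\ell_R)}(t)}_{\gamma(H,X)}\lesssim \tfrac{1}{2^k}\,A^{(4\alpha+\sqrt{n})}_{\ell_P}(f)(x),
\]
where the pointwise Lipschitz bound on the difference of $\Phi$'s is absorbed via the ideal/multiplier property of the $\gamma$-norm (bounded scalar multipliers act contractively up to their $L^\infty$-norm), and the geometric series in $k$ sums to a constant. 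Combining these two estimates completes the proof.

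\textbf{Main obstacle.} The one non-routine point is the disjoint-support $\ell^r$-inequality for $\gamma(H,X)$: for strongly measurable $g_k\colon\R^{n+1}_+\to X$ supported on pairwise disjoint measurable sets, $\nrm{\sum_k g_k}_{\gamma(H,X)}\le \tau_{r,X}\hab{\sum_k\nrm{g_k}_{\gamma(H,X)}^r}^{1/r}$. This is a standard but slightly delicate consequence of approximation by simple tensors, the identification of $\gamma$-norms with Gaussian sums of orthogonal vectors, the Kahane--Khintchine inequality, and Gaussian type. Once this fact is invoked, both the $\ell^r$-condition in Step~2 and the dyadic telescoping in the sharp-maximal estimate proceed as in the scalar case, and no other vector-valued subtleties arise.
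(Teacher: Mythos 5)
Your proposal is correct and follows essentially the same route as the paper: the same choice of $f_Q$ and $f_{P,Q}$ via $\gamma(H,X)$-norms of time-truncated, smoothly cut-off copies of $f$, the $\ell^r$-condition with $C_r=\tau_{r,X}$ obtained from the upper $\ell^r$-estimate for disjointly supported functions in $\gamma(H,X)$ under type $r$ (this is precisely \cite[Proposition 9.4.13]{HNVW17}, which the paper cites), an application of Theorem \ref{ofsdp} with exponent $r$, and the estimate of $\ga_P$ by an $L^q$-average using \eqref{rearch} and the $L^\infty$-multiplier (ideal) property of the $\gamma$-norm in place of the scalar pointwise bounds. The paper states these steps tersely as alterations to the proof of Theorem \ref{theorem:sparsetent}; you have simply written them out in full, with no substantive difference.
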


\begin{proof}
As in the proof of Theorem \ref{theorem:sparsetent}, let $\Phi$ be a smooth function such that
$\chi_{B(0,1)}\le \Phi\le \chi_{B(0,2)}$ and for $Q \in \mc{Q}$ and $P \in \mc{D}(Q)$ define
\begin{align*}
  f_Q(x)&:= \nrmb{(y,t) \mapsto  f(y,t) \cdot \Phi\hab{\tfrac{x-y}{\alpha t}} \cdot \chi_{(0,\ell_Q)}(t)}_{\gamma(H,X)}, && x \in Q,\\
  f_{P,Q}(x)&:= \nrmb{(y,t) \mapsto  f(y,t) \cdot \Phi\hab{\tfrac{x-y}{\alpha t}} \cdot \chi_{(\ell_P,\ell_Q)}(t)}_{\gamma(H,X)}, && x \in P.
\end{align*}
For the family $\{f_Q,f_{P,Q}\}_{Q \in \mc{Q}, P \in \mc{D}(Q)}$ the $\ell^r$-condition holds with $C_r = \tau_{r,X}$ by \cite[Proposition 9.4.13]{HNVW17}. The rest of the proof follows the lines of the proof of Theorem \ref{theorem:sparsetent}. The only alterations are the following:
\begin{itemize}
  \item We replace pointwise estimates by the fact that for $f \in \gamma(H,X)$ and $g \in L^\infty(\R^{n+1}_+)$ we have
\begin{equation*}
     \nrm{f \cdot g}_{\gamma(H,X)} \leq \nrm{g}_{L^\infty(\R^{n+1}_+)}\nrm{f}_{\gamma(H,X)}.
 \end{equation*}
 \item We use Theorem \ref{ofsdp} with $r=r$ instead of $r=2$.
 \item In the concluding estimate, we use (\ref{rearch}) for $q$ instead of $2$.\qedhere
\end{itemize}
\end{proof}

As in the scalar-valued setting, as a direct corollary of Theorem \ref{theorem:sparsetentvector}, we obtain the following: For $p,q \in (0,\infty)$, $\alpha>0$, a Banach space $X$ and any strongly measurable $f \colon \mathbb{R}^{n+1}_+ \to X$ we have
\begin{align*}
 \nrm{f}_{T^p_\alpha(X)} \lesssim \nrm{C^{(4\alpha+\sqrt{n})}_q(f)}_{L^p(\R^n)}, && p \in (0,\infty).
\end{align*}
Moreover, since $C_q^{(\alpha)}(f)^q \lesssim M(A^{(\alpha)}(f)^q)$, we have
\begin{align*}
 \nrm{C^{(\alpha)}_q(f)}_{L^p(\R^n)}&\lesssim  \nrm{f}_{T^p_{\alpha}(X)}, && 0<q<p<\infty.
\end{align*}
As noted before, this recovers  \cite[Theorem 4.4]{HW10} if $p \in (1,\infty)$ and $X$ is a $\UMD$ Banach space.

To conclude this subsection, let us note that, doing similar adaptations to the proof of Theorem \ref{theorem:exp} as we did in the proof of Theorem \ref{theorem:sparsetentvector}, we can improve the vector-valued good-$\lambda$ inequality in \cite[Theorem 4.4]{HW10} to exponential dependence on $\gamma^r$.

\section{Vector-valued square functions}\label{sec:squarefun}
In a recent paper by Xu \cite{Xu21}, vector-valued Littlewoood--Paley--Stein theory was developed using Littlewood--Paley theory and functional calculus methods, which vastly improves earlier approaches. In this section we will simplify the technical core of \cite{Xu21}, using our pointwise sparse domination principle.

 To introduce the main result of \cite{Xu21},  let $p \in (1,\infty)$, let $(\Omega,\mu)$ be a $\sigma$-finite measure space and let $X$ be a Banach space with martingale cotype $q \in [2,\infty)$ with constant $c_{q,X}^{\mart}$. We refer to  \cite[Section 3.5.d]{HNVW16} for an introduction to martingale (co)type.
 For a strongly continuous semigroup of regular operators $\cbrace{T_t}_{t\geq 0}$ on $L^p(\Omega)$ and its subordinated Poisson semigroup $\cbrace{P_t}_{t\geq 0}$, one of the main results of \cite{Xu21} states that for $f  \in L^p(\Omega;X)$ one has
 \begin{equation}\label{eq:Xuresult}
   \nrms{\has{\int_0^\infty \nrmb{t \tfrac{\partial}{\partial t}P_t(f)}_X^q}^{1/q}}_{L^p(\Omega)} \lesssim \max\cbrace{p^{\frac1q},p'} \cdot c_{q,X}^{\mart} \cdot \nrm{f}_{L^p(\Omega;X)}.
 \end{equation}

 The converse of this estimate is shown to hold under a martingale type assumption. Moreover, using functional calculus techniques, similar estimates with $\cbrace{T_t}_{t\geq 0}$ instead of $\cbrace{P_t}_{t\geq 0}$ are obtained under an analyticity assumption.
The growth order in $p$ in most of these estimates is sharp for $p \to 1$ and $p \to \infty$. When $\cbrace{T_t}_{t\geq 0}$ is the heat semigroup on $\R^n$, these results answer a question raised by Naor and Young in the appendix of \cite{NY20}.

The most technical part of the argument in \cite{Xu21} is a sharp estimate for a vector-valued variant of the vertical square function. For $\varepsilon,\delta>0$ let $\mc{H}_{\varepsilon,\delta}$ be the class of all $\varphi\colon \R^n \to \R$ such that $\int_{\R^n} \varphi = 0$ and
\begin{align}
\label{eq:phi1} \abs{\varphi(x)} &\leq \frac{1}{(1+\abs{x})^{n+\varepsilon}}, &&x \in \R^n \\
 \label{eq:phi2} \abs{\varphi(x)-\varphi(x')} &\leq \frac{\abs{x-x'}^\delta}{(1+\min\cbrace{\abs{x},\abs{x'}})^{n+\varepsilon+\delta}}, && x,x'\in \R^n.
  \end{align}
For $\varphi \in \mc{H}_{\varepsilon,\delta}$ and $f \in L^1(\R^n;X)$ define
\begin{equation*}
  G_{q,\varphi}(f)(x) = \has{\int_{0}^\infty\nrm{\varphi_t *f(x)}_X^q\frac{\ddn t}{t}}^{1/q}, \qquad x \in \R^n,
\end{equation*}
where $\varphi_t(x) = \frac{1}{t^n}\varphi\ha{\frac{x}{t}}$.
In \cite{Xu21} the main result \eqref{eq:Xuresult} follows from
\begin{equation}\label{eq:Xutechnical}
  \nrm{G_{q,\varphi}(f)}_{L^{p}(\R^n)} \lesssim \max\cbrace{p^{\frac1q},p'} \cdot c_{q,X}^{\mart} \,\nrm{f}_{L^p(\R^n;X)},
\end{equation}
by representing the left-hand side of \eqref{eq:Xuresult} for the Poisson semigroup subordinated to the translation group on $\R$ by $G_{q,\varphi}(f)$ for some $\varphi \in \mathcal{H}_{\frac12,1}$ and then using a transference argument for general semigroups.

The case $p < q$ of \eqref{eq:Xutechnical} follows quite easily from the case $p=q$, using classical Calder\'on--Zygmund theory. The case $p>q$ with optimal dependence on $p$ is harder, for which delicate results on conical and intrinsic square functions and weighted estimates, developed in the scalar-valued case by Wilson \cite{Wi07, Wi08}, are adapted to the vector-valued setting in \cite[Section 6]{Xu21}. We will prove \eqref{eq:Xutechnical} without the use of this machinery, instead opting to use our sparse domination principle. 

As a starting point we will use the following weak $L^1$-estimate for $G_{q,\varphi}$, which is implicitly contained in \cite{Xu21}.
\begin{prop}\label{prop:weakL1square}
  Let $q \in [2,\infty)$, let $X$ be a Banach space with martingale cotype $q$ and let $\varphi \in \mc{H}_{\varepsilon,\delta}$ for $\varepsilon,\delta>0$. Then we have for $f \in L^1(\R^n;X)$,
  \begin{equation*}
  \nrm{G_{q,\varphi}(f)}_{L^{1,\infty}(\R^n)} \lesssim c_{q,X}^{\mart} \,\nrm{f}_{L^1(\R^n;X)},
\end{equation*}
with the implicit constant only depending on $\varepsilon,\delta,n$.
\end{prop}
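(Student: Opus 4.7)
The plan is to prove this by a classical Calderón--Zygmund decomposition argument, with the martingale cotype $q$ hypothesis providing the required strong-type endpoint. First I would establish the strong estimate
$$\nrm{G_{q,\varphi}(f)}_{L^q(\R^n)} \lesssim c_{q,X}^{\mart} \, \nrm{f}_{L^q(\R^n;X)},$$
which is a standard vector-valued Littlewood--Paley consequence of martingale cotype $q$ for kernels satisfying \eqref{eq:phi1}--\eqref{eq:phi2} (see \cite{Xu21}). The scalar cotype of $L^q(\R^n)$ equals $q$, so this matches the required dependence on $c_{q,X}^{\mart}$.

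Given $\lambda>0$, I would apply the scalar Calderón--Zygmund decomposition of $\nrm{f(\cdot)}_X\in L^1(\R^n)$ at height $\lambda$, producing a pairwise disjoint family of cubes $\cbrace{Q_j}$ and a splitting $f = g + \sum_j b_j$ with the usual properties: $\nrm{g}_{L^\infty(X)}\lesssim \lambda$, $\mathrm{supp}\, b_j \subseteq Q_j$, $\int b_j = 0$, $\nrm{b_j}_{L^1(X)} \lesssim \lambda\abs{Q_j}$, and $\sum_j \abs{Q_j}\lesssim \lambda^{-1}\nrm{f}_{L^1(X)}$. The good part is handled by interpolation and the strong $L^q$-bound: since $\nrm{g}_{L^q(X)}^q\le \nrm{g}_{L^\infty(X)}^{q-1}\nrm{g}_{L^1(X)}\lesssim \lambda^{q-1}\nrm{f}_{L^1(X)}$, Chebyshev yields
$$\absb{\cbrace{G_{q,\varphi}(g)>\lambda/2}}\lesssim (c_{q,X}^{\mart})^q \lambda^{-1}\nrm{f}_{L^1(X)}.$$

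For the bad part, set $E=\bigcup_j 2Q_j$, so that $\abs{E}\lesssim \lambda^{-1}\nrm{f}_{L^1(X)}$. By Chebyshev and the triangle inequality, matters reduce to the key H\"ormander-type estimate
$$\int_{(2Q_j)^c} G_{q,\varphi}(b_j)(x)\dd x \lesssim \nrm{b_j}_{L^1(\R^n;X)}$$
for each $j$, which is the main obstacle. Using the cancellation $\int b_j=0$ with $y_j$ the center of $Q_j$,
$$\varphi_t \ast b_j(x) = \int_{Q_j}\hab{\varphi_t(x-y)-\varphi_t(x-y_j)}\, b_j(y)\dd y,$$
and by the triangle inequality in $X$ followed by Minkowski in $L^q(\R_+,\tfrac{\dd t}{t})$,
$$G_{q,\varphi}(b_j)(x)\le \int_{Q_j}\nrm{b_j(y)}_X\has{\int_0^\infty \absb{\varphi_t(x-y)-\varphi_t(x-y_j)}^q\tfrac{\dd t}{t}}^{1/q}\dd y.$$
For $x\notin 2Q_j$ and $y\in Q_j$ we have $\abs{y-y_j}\lesssim \ell_{Q_j}$ and $\min(\abs{x-y},\abs{x-y_j})\gtrsim \abs{x-y_j}$, so \eqref{eq:phi2} gives
$$\absb{\varphi_t(x-y)-\varphi_t(x-y_j)}\lesssim \frac{\ell_{Q_j}^\delta}{t^{n+\delta}(1+\abs{x-y_j}/t)^{n+\varepsilon+\delta}}.$$
The substitution $s=\abs{x-y_j}/t$ turns the $t$-integral into a Beta function that converges thanks to $\varepsilon,\delta>0$, yielding
$$\has{\int_0^\infty\absb{\varphi_t(x-y)-\varphi_t(x-y_j)}^q\tfrac{\dd t}{t}}^{1/q}\lesssim \frac{\ell_{Q_j}^\delta}{\abs{x-y_j}^{n+\delta}}.$$
Integrating over $x\in (2Q_j)^c$ gives a constant independent of $j$, so the key estimate follows; summing over $j$ completes the proof.
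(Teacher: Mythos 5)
Your approach is the same as the paper's: the authors likewise obtain the strong $L^q$ bound from the martingale cotype hypothesis (citing Lemma 5.6 of \cite{Xu21}) and then pass to the weak $(1,1)$ endpoint by viewing $G_{q,\varphi}$ as a Calder\'on--Zygmund operator (citing Lemma 5.4 of \cite{Xu21} for the kernel regularity). You have simply written out the Calder\'on--Zygmund decomposition and the H\"ormander-type kernel estimate that those citations encapsulate, and your kernel computation (cancellation, the bound from \eqref{eq:phi2}, and the substitution $s=\abs{x-y_j}/t$ using $\varepsilon>0$ for convergence) is correct.

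One quantitative point deserves attention, since the proposition claims \emph{linear} dependence on $c_{q,X}^{\mart}$ and this linearity is what is used downstream in Section \ref{sec:squarefun}. Decomposing at height $\lambda$ and applying Chebyshev at level $\lambda/2$, your good-part estimate produces $(c_{q,X}^{\mart})^q\lambda^{-1}\nrm{f}_{L^1(\R^n;X)}$, i.e.\ the $q$-th power of the constant rather than the first power; since $c_{q,X}^{\mart}\ge 1$, this is genuinely weaker than claimed. The standard remedy is to perform the Calder\'on--Zygmund decomposition at height $\gamma\lambda$ with $\gamma\eqsim 1/c_{q,X}^{\mart}$: then $\nrm{g}_{L^q(\R^n;X)}^q\lesssim(\gamma\lambda)^{q-1}\nrm{f}_{L^1(\R^n;X)}$ makes the good part $\lesssim c_{q,X}^{\mart}\lambda^{-1}\nrm{f}_{L^1(\R^n;X)}$, while the exceptional set and the bad part pick up only a factor $\gamma^{-1}\eqsim c_{q,X}^{\mart}$. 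A second, purely cosmetic point: for $x\notin 2Q_j$ and $y\in Q_j$ one has $\abs{y-y_j}\le\sqrt{n}\,\ell_{Q_j}/2$, which need not be dominated by $\abs{x-y_j}$ in high dimensions; replacing $2Q_j$ by $c_nQ_j$ for a suitable dimensional constant fixes this without affecting the measure of the exceptional set.
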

\begin{proof}
  The estimate
  \begin{equation*}
  \nrm{G_{q,\varphi}(f)}_{L^{q}(\R^n)} \lesssim c_{q,X}^{\mart} \,\nrm{f}_{L^q(\R^n;X)}
\end{equation*}
  follows directly from \cite[Lemma 5.6]{Xu21}, see the first half of Step 1 of the proof of \cite[Theorem 1.5]{Xu21}. The proposition then follows by viewing $G_{q,\varphi}$ as a Calder\'on--Zygmund operator using \cite[Lemma 5.4]{Xu21}.
\end{proof}

For $\varphi \in \mc{H}_{\varepsilon,\delta}$ for $\varepsilon,\delta>0$ define the localisation
\begin{equation*}
  G_{q,\varphi}^h(f)(x) = \has{\int_{0}^h\nrm{\varphi_t *f(x)}_X^q\frac{\ddn t}{t}}^{1/q}, \qquad x \in \R^n
\end{equation*}
for $h>0$. Since the support of $\varphi$ is not necessarily compact, the support of $G_{q,\varphi}^h(f)$ is not localised to (a multiple of) the support of $f$. Therefore, for arbitrary $f \in L^1(\R^d;X)$,  one can not estimate $G_{q,\varphi}^h(f)$ by a local expression of the form
  \begin{equation}\label{eq:sparsedomqnot}
\has{\sum_{P \in \mc{F}}  \ipb{\nrm{f}_X}_{1,\alpha P}^q \chi_P}^{1/q}, \qquad \alpha \geq 1.
  \end{equation}
  This, in particular, means that the precursor of Theorem \ref{ofsdp} in \cite{Lo19b}, i.e. Theorem \ref{ellrsdp}, is not applicable to the localisation $G_{q,\varphi}^{\ell_Q}(f)$.

   Thanks to the flexible formulation of Theorem \ref{ofsdp}, we are able to compensate the nonlocal behaviour of $G_{q,\varphi}^h(f)$ by adding a convergent series of dilations of $P$ to \eqref{eq:sparsedomqnot}. The main result of this section reads as follows:

\begin{theorem}\label{theorem:squarefunction}
  Let $q \in [2,\infty)$, let $X$ be a Banach space with martingale cotype $q$ and let $\varphi \in \mc{H}_{\varepsilon,\delta}$ with $\varepsilon,\delta>0$. For any $f \in L^1(\R^n;X)$ and $Q \in \mc{Q}$ there exists a $\frac{1}{2}$-sparse collection of cubes $\mc{F} \subset\mc{D}(Q)$ such that for a.e. $x \in Q$,
  \begin{equation*}
    G_{q,\varphi}^{\ell_Q}(f)(x) \lesssim  c_{q,X}^{\mart}  \has{\sum_{P \in \mc{F}} \sum_{m=1}^\infty \frac{1}{2^{m\varepsilon}}\ipb{\nrm{f}_X}_{1,2^{m}P}^q \chi_P(x)}^{1/q},
  \end{equation*}
  with the implicit constant depending only on $\varepsilon,\delta,n$.
\end{theorem}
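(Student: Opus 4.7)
My plan is to apply the pointwise sparse domination principle of Theorem \ref{ofsdp} to the canonical family
\begin{align*}
f_Q(x) &:= G_{q,\varphi}^{\ell_Q}(f)(x),\\
f_{P,Q}(x) &:= \has{\int_{\ell_P}^{\ell_Q}\nrm{\varphi_t\ast f(x)}_X^q\,\tfrac{\ddn t}{t}}^{1/q},
\end{align*}
for $Q\in\mc{Q}$ and $P\in\mc{D}(Q)$. Since $f_Q^q=f_P^q+f_{P,Q}^q$ pointwise, the $\ell^q$-condition holds with $C_q=1$, and condition \eqref{cond} is immediate from $|f_{P,Q}|\le|f_Q|$. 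Applying Theorem \ref{ofsdp} with $r=q$ and $\eta=\tfrac12$ reduces the theorem to showing, for every cube $P$ in the produced sparse family $\mc{F}$, the estimate
$$\gamma_P^q \lesssim \hab{c_{q,X}^{\mart}}^q\sum_{m=1}^\infty 2^{-m\varepsilon}\ipb{\nrm{f}_X}_{1,2^mP}^q.$$

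To estimate $\gamma_P$ I would split $f=g+h$ with $g:=f\chi_{2P}$ and $h:=f\chi_{(2P)^c}$. For $(f_P\chi_P)^*\hab{|P|/2^{n+3}}$ the local part $g$ is handled by the weak-$L^1$ estimate for $G_{q,\varphi}$ in Proposition~\ref{prop:weakL1square} combined with \eqref{rearch} and Chebyshev's inequality, yielding a bound of $c_{q,X}^{\mart}\ipb{\nrm{f}_X}_{1,2P}$. For the nonlocal part $h$, decomposing $(2P)^c$ into the dyadic annuli $2^{m+1}P\setminus 2^mP$ and using the size condition \eqref{eq:phi1} gives, for $x\in P$ and $t\in(0,\ell_P)$, the pointwise bound $\nrm{\varphi_t\ast h(x)}_X\lesssim t^{\varepsilon}\ell_P^{-\varepsilon}\sum_{m\ge1} 2^{-m\varepsilon}\ipb{\nrm{f}_X}_{1,2^{m+1}P}$; raising to the $q$-th power, integrating in $t$ over $(0,\ell_P)$, and converting the resulting $\ell^1$-sum into the desired $\ell^q$-sum with summable weight $2^{-m\varepsilon}$ via H\"older's inequality produces the claimed bound.

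The main obstacle is estimating $(m_P^{\#}f)^*\hab{|P|/2^{n+3}}$. For $R\in\mc{D}(P)$ and $x,x'\in R$, Minkowski's inequality in $L^q((\ell_R,\ell_P);X,\tfrac{\ddn t}{t})$ reduces matters to controlling $\nrm{\varphi_t\ast f(x)-\varphi_t\ast f(x')}_X$, and I would split $f=g+h$ again. For the nonlocal piece, the H\"older condition \eqref{eq:phi2} together with $\min\{|x-y|,|x'-y|\}\gtrsim 2^m\ell_P$ on $y\in 2^{m+1}P\setminus 2^mP$ gains a bonus factor $(\ell_R/\ell_P)^\delta\le 1$, so after integration in $t$ and summation in $m$ one recovers the same pointwise bound as above. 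The local piece is more delicate because $\varphi$ lacks a global Lipschitz estimate at the origin: here I would decompose $2P$ into dyadic shells $B(x_R,2^{k+1}t)\setminus B(x_R,2^{k}t)$ around the centre $x_R$ of $R$, apply \eqref{eq:phi2} on each shell, and replace the resulting shell integrals of $\nrm{f}_X$ by the value $M(\nrm{f\chi_{2P}}_X)(z)$ of the Hardy--Littlewood maximal operator at any $z\in R$. Summing in $k$ and integrating in $t\in(\ell_R,\ell_P)$ then bounds the local-part contribution to $m_P^{\#}f(x)$ by a constant multiple of $M(\nrm{f\chi_{2P}}_X)(x)$, whose distribution on $P$ is controlled by $\ipb{\nrm{f}_X}_{1,2P}$ via the weak-$(1,1)$ bound for $M$. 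Absorbing this contribution into the $m=1$ term of the geometric sum completes the estimate for $\gamma_P$, after which the conclusion of Theorem~\ref{ofsdp} gives the result.
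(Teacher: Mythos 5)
Your proposal is correct and follows essentially the same route as the paper: the same choice of $f_Q$ and $f_{P,Q}$, the identity $f_Q^q=f_P^q+f_{P,Q}^q$ giving the $\ell^q$-condition with $C_q=1$, Theorem \ref{ofsdp} with $r=q$, and then the estimate of $\gamma_P$ by splitting off $f\chi_{\R^n\setminus 2P}$ (handled by the size condition and annuli) and using Proposition \ref{prop:weakL1square} for the local part. Your treatment of the oscillation term differs only cosmetically: the paper splits $f$ three ways ($\R^n\setminus 2P$, $2P\setminus 2R$, $2R$) and invokes the standard H\"ormander-type kernel estimate, whereas you carry out the equivalent shell decomposition by hand; both land on $M(\nrm{f\chi_{2P}}_X)$ and the weak $(1,1)$ bound for $M$.
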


Using H\"older's inequality and the boundedness of the maximal operator, Theorem  \ref{theorem:squarefunction} yields for $p>q$ and any $g \in L^{(p/q)'}(\R^n)$ that
\begin{align*}
\int_Q  G_{q,\varphi}^{\ell_Q}(f)^q g&\lesssim (c_{q,X}^{\mart})^q\sum_{P \in \mc{F}} \sum_{m=1}^\infty \frac{1}{2^{m\varepsilon}}\ipb{\nrm{f}_X}_{1,2^mP}^q \int_P g\\
&\lesssim  (c_{q,X}^{\mart})^q \sum_{P \in \mc{F}} \int_{E_P} M\hab{\nrm{f}_X }^q Mg\\
&\lesssim (c_{q,X}^{\mart})^q \cdot  \frac{p}{q} \cdot \nrm{f}_{L^p(\R^n;X)}^q \nrm{g}_{L^{(p/q)'}(\R^n)}.
\end{align*}
This yields \eqref{eq:Xutechnical} by duality and the monotone convergence theorem.

Moreover, one can deduce sharp weighted estimates for $G_{q,\varphi}(f)$ for weights in the Muckenhoupt $A_p$-class, using \cite[Lemma 4.5]{Le16} and an argument as in \cite[Section 4]{Le14}.

\begin{proof}[Proof of Theorem \ref{theorem:squarefunction}]
For $Q \in \mc{Q}$ and $P \in \mc{D}(Q)$ define
\begin{align*}
  f_Q(x) &:= G_{q,\varphi}^{\ell_Q}(f)(x),&& x \in \R^n,\\
  f_{P,Q}(x) &:= \has{\int_{\ell_P}^{\ell_Q}\nrm{\varphi_t *f(x)}_X^q\frac{\ddn t}{t}}^{1/q},&& x \in \R^n.
\end{align*}
The family $\{f_Q,f_{P,Q}\}_{Q \in \mc{Q}, P \in \mc{D}(Q)}$ trivially satisfies the $\ell^q$-condition with $C_q=1$, and condition (\ref{cond}) holds as well.
Therefore, by Theorem \ref{ofsdp}, there exists a $\frac{1}{2}$-sparse family ${\mathcal F}\subset {\mathcal D}(Q)$ such that
\begin{equation*}
 G_{q,\varphi}^{\ell_Q}(f)(x)\lesssim \has{\sum_{P\in {\mathcal F}}\ga_P^q\chi_P(x)}^{1/q},\qquad x\in Q.
\end{equation*}
Thus it suffices to show
\begin{equation}\label{eq:gammaforsquare}
  \ga_P \lesssim c_{q,X}^{\mart} \cdot \has{ \sum_{m=1}^\infty \frac{1}{2^{m\varepsilon}}\ipb{\nrm{f}_X}_{1,2^mP}^q}^{1/q}:= c_{q,X}^{\mart} \cdot \mc{M}_P
\end{equation}
for $P \in \mc{F}$.

Fix $P \in \mc{F}$. For any $z \in P$ we have by \eqref{eq:phi1} and H\"older's inequality
\begin{align} \label{eq:nonlocalpart}
\begin{aligned}
  G_{q,\varphi}^{\ell_P}&(f\chi_{\R^n \setminus 2P})(z)\\ 
  &\leq \has{\int_{0}^{\ell_P} \has{\sum_{m=1}^\infty\int_{(2^{m+1}P)\setminus (2^mP)} \frac{1}{{\abs{z-y}}^{d+\varepsilon}} \nrm{f(y)}_X  \dd y}^q\frac{\ddn t}{t^{1-q\varepsilon}}}^{\frac1q}\\
  &\lesssim \sum_{m=2}^\infty \frac{1}{2^{m\varepsilon}}\ipb{\nrm{f}_X}_{1,2^mQ}\cdot \has{\ell_P^{-q\varepsilon} \int_{0}^{\ell_P}  \frac{\ddn t}{t^{1-q\varepsilon}}}^{\frac1q}
   \lesssim  \mc{M}_P.
\end{aligned}
\end{align}
Therefore, we have by the weak $L^1$-boundedness of $G_{q,\varphi}$ in Proposition~\ref{prop:weakL1square}, that
\begin{align*}
  (f_P\chi_P)^*(|P|/2^{n+3}) &\lesssim \hab{G_{q,\varphi}(f\chi_{2P})\chi_P}^*(|P|/2^{n+3}) + \mc{M}_P\\
  &\lesssim  c_{q,X}^{\mart} \cdot \ipb{\nrm{f}_X}_{1,2P} +\mc{M}_P \lesssim \,c_{q,X}^{\mart} \cdot \mc{M}_P.
\end{align*}

Now let us turn to $(m^{\#}_Pf)^*(|P|/2^{n+3})$. Fix $x \in P$ and $R \in \mc{D}(P)$ such that $x \in R$.
We will split \begin{equation}\label{eq:splittingf}
  f = f\chi_{\R^n\setminus 2P} + f\chi_{2P\setminus 2R}+ f\chi_{2R}.
\end{equation}
For $\xi,\eta\in R$ we note that by \eqref{eq:phi2} and \cite[Theorem 2.1.10]{Gr14a} we have
\begin{align*}
  \has{\int_{\ell_R}^{\ell_P}&\nrmb{\varphi_t *f\chi_{2P\setminus 2R}(\xi)-\varphi_t *f\chi_{2P\setminus 2R}(\eta)}_X^q\frac{\ddn t}{t}}^{1/q}\\
  &\lesssim \has{\int_{\ell_R}^{\ell_P} \has{ \abs{\xi-\eta}^{\delta} \int_{\R^n\setminus B(x,{\ell_R/2})}\frac{\nrm{f(y)\chi_{2P}(y)}_X}{\abs{x-y}^{n+\frac{\delta}{2}}}\dd y}^q \frac{\ddn t}{t^{1+\frac{\delta}{2}}}}^{1/q}\\
  &\lesssim \ell_R^\delta \cdot \nrms{\frac{\chi_{\R\setminus B(0,\ell_R/2)}}{\abs{\cdot}^{n+\frac{\delta}{2}}}}_{L^1(\R^n)} \cdot \int_{\ell_R}^{\ell_P} \frac{\ddn t}{t^{1+\frac{\delta}{2}}}\cdot
  M\hab{\nrm{f\chi_{2P}}_X }(x)\\
  &\lesssim M\hab{\nrm{f\chi_{2P}}_X }(x).
\end{align*}
Furthermore, by \eqref{eq:phi1} we have for $\xi \in R$
\begin{align*}
  \has{\int_{\ell_R}^{\ell_P}\nrmb{\varphi_t *f\chi_{2R}(\xi)}_X^q\frac{\ddn t}{t}}^{1/q} &\leq \int_{2R} \nrm{f(y)}_X \dd y\cdot  \has{\int_{\ell_R}^{\ell_P}\frac{\ddn t}{t^{1+qn}}}^{1/q}\\
  &\lesssim M\hab{\nrm{f\chi_{2P}}_X }(x).
\end{align*}

Splitting as in \eqref{eq:splittingf}, combining these estimates with \eqref{eq:nonlocalpart} and using the weak $L^1$-boundedness of the maximal operator, we therefore obtain
\begin{align*}
  (m^{\#}_Pf)^*(|P|/2^{n+3})&\lesssim \hab{M(\nrm{f\chi_{2P}}_X)}^*(|P|/2^{n+3}) +\mc{M}_P\\  &\lesssim  \ipb{\nrm{f}_X}_{1,2P} +\mc{M}_P \lesssim \mc{M}_P .
\end{align*}
This finishes the proof of \eqref{eq:gammaforsquare} and thus the proof of the theorem.
\end{proof}

\section{An application to dyadic sums}\label{sec:dyadic}
In this final section we will give a condition on a sequence $\{\a_R\}_{R\in {\mathcal D}(Q)}$ for $Q \in \mc{Q}$ that allows to control a dyadic sum of the form $\sum_{R\in {\mathcal D}(Q)}\a_R\chi_R$ by a sum over a sparse family ${\mathcal F}\subset {\mathcal D}(Q)$. As an application, we will generalise and provide a new proof of a good-$\lambda$ inequality  of Honz\'ik and Jaye \cite{HJ12}.

\begin{theorem}\label{sumssparse} Let $Q\in {\mathcal Q}$ and let $\{\a_R\}_{R\in {\mathcal D}(Q)}$ be a sequence of nonnegative numbers. Suppose that there exist
$C>0$ and $0<\d\le 1$ such that for every cube $Q'\in {\mathcal D}(Q)$,
\begin{equation}\label{dsmall}
\sum_{R\in {\mathcal D}(Q')}\a_R^{\d}|R|\le C\a_{Q'}^{\d}|Q'|.
\end{equation}
Then there exists a $\frac{1}{2}$-sparse family ${\mathcal F}\subset {\mathcal D}(Q)$ such that for a.e. $x\in Q$,
$$
\sum_{R\in {\mathcal D}(Q)}\a_R\chi_R(x)\lesssim C \,\sum_{P\in {\mathcal F}}\a_P\chi_P(x).
$$
\end{theorem}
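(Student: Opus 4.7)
The plan is to apply the local version of Theorem \ref{ofsdp} (see Remark \ref{localcase}) with $r=1$ and $\eta=\frac12$, applied to the natural family of dyadic partial sums associated with $\{\alpha_R\}$. Concretely, for $Q' \in \mc{D}(Q)$ and $P' \in \mc{D}(Q')$ I would set
\begin{equation*}
f_{Q'}(x):=\sum_{R\in\mc{D}(Q')}\alpha_R\,\chi_R(x),\qquad f_{P',Q'}(x):=f_{Q'}(x)-f_{P'}(x).
\end{equation*}
Since $\alpha_R\geq 0$ and $\mc{D}(P')\subset\mc{D}(Q')$, we have $0\leq f_{P',Q'}\leq f_{Q'}$ on $P'$, so \eqref{cond} holds; and for any chain $P_m\subset\cdots\subset P_1$ in $\mc{D}(Q)$ the expression $\sum_{k=1}^{m-1}f_{P_{k+1},P_k}+f_{P_m}$ telescopes to $f_{P_1}$ pointwise on $P_m$, so the $\ell^1$-condition holds with $C_1=1$.

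The key observation is then that the sharp maximal piece of $\gamma_P$ vanishes identically. Indeed, for $R\in\mc{D}(P)$ and $x\in R$, splitting the cubes in $\mc{D}(P)$ containing $x$ into the ancestors of $R$ (in $\mc{D}(P)$) and the cubes in $\mc{D}(R)$ containing $x$ gives
\begin{equation*}
f_{R,P}(x)=f_P(x)-f_R(x)=\sum_{S\in\mc{D}(P):\,R\subsetneq S\subseteq P}\alpha_S,
\end{equation*}
which is constant on $R$. Hence $\osc(f_{R,P};R)=0$ for every such $R$, so $m_P^{\#}f\equiv 0$, and only the first term in the definition of $\gamma_P$ survives.

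It remains to bound $(f_P\chi_P)^*\bigl(|P|/2^{n+3}\bigr)$ by a multiple of $\alpha_P$. Here I would use the elementary subadditivity $\bigl(\sum_i a_i\bigr)^\delta\leq\sum_i a_i^\delta$ for $a_i\geq 0$ and $\delta\in(0,1]$, together with hypothesis \eqref{dsmall}, to get
\begin{equation*}
\int_P f_P^{\delta}\leq\sum_{R\in\mc{D}(P)}\alpha_R^{\delta}|R|\leq C\,\alpha_P^{\delta}|P|,
\end{equation*}
and then invoke \eqref{rearch} with exponent $\delta$ to conclude $(f_P\chi_P)^*\bigl(|P|/2^{n+3}\bigr)\lesssim C^{1/\delta}\alpha_P$, with implicit constant depending only on $n$ and $\delta$. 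Applying Theorem \ref{ofsdp}/Remark \ref{localcase} with $r=1$ and $\eta=\frac12$ then yields a $\frac12$-sparse family $\mc{F}\subset\mc{D}(Q)$ with $f_Q(x)\lesssim\sum_{P\in\mc{F}}\gamma_P\chi_P(x)\lesssim C\sum_{P\in\mc{F}}\alpha_P\chi_P(x)$, which is exactly the claim (with the $1/\delta$ power absorbed into the $\lesssim$ as $\delta$ is a fixed parameter). I do not anticipate any genuine obstacle; the only step requiring any thought is spotting that $f_{R,P}$ is constant on $R$, which collapses the $m_P^{\#}f$ term and is what makes the whole argument so short.
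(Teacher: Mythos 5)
Your proposal is correct and follows essentially the same route as the paper's own proof: the same choice of $f_{Q'}$ and $f_{P',Q'}=f_{Q'}-f_{P'}$, the same observation that $f_{R,P}$ is constant on $R$ so that $m_P^{\#}f\equiv 0$, and the same combination of \eqref{rearch} with \eqref{dsmall} to bound $(f_P\chi_P)^*\hab{|P|/2^{n+3}}$ by a multiple of $\a_P$. The only (shared, cosmetic) discrepancy is that this argument yields the constant $C^{1/\d}$ rather than $C$, exactly as in the paper's proof.
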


\begin{proof} For $Q' \in \mc{D}(Q)$ denote
$$f_{Q'}(x)=\sum_{R\in {\mathcal D}(Q')}\a_R\chi_R(x), \qquad x \in Q',$$
and for $P\in  \mc{D}(Q')$ set $f_{P,Q'}=f_{Q'}-f_P$. Then $\{f_{Q'},f_{P,Q'}\}_{Q' \in \mc{D}(Q), P \in \mc{D}(Q')}$ trivially satisfies the $\ell^1$-condition (with $C_1=1$) and condition (\ref{cond}).

Observe that $f_{P,Q}$ is a constant on~$P$, and therefore $m_P^{\#}f\equiv 0$. Hence, by the local version of Theorem \ref{ofsdp} (see Remark \ref{localcase}),
there exists a $\frac{1}{2}$-sparse family ${\mathcal F}\subset {\mathcal D}(Q)$ such that, for a.e. $x\in Q$,
$$
\sum_{R\in {\mathcal D}(Q)}\la_R\chi_R(x)\le \sum_{P\in {\mathcal F}}(f_P\chi_P)^*(|P|/2^{n+3})\chi_P(x).
$$
By (\ref{rearch}),
$$
(f_P\chi_P)^*(|P|/2^{n+3})\le \has{\frac{2^{n+3}}{|P|}\int_Pf_P^{\d}}^{1/\d}
$$
and, by (\ref{dsmall}),
$$
\int_Pf_P^{\d}\le \sum_{R\in {\mathcal D}(P)}\a_R^{\d}|R|\le C\a_P^{\d}|P|.
$$
Combining these three estimates  completes the proof.
\end{proof}

Let ${\mathscr D}$ be a dyadic lattice in ${\mathbb R}^n$.
Given a sequence of nonnegative numbers $\mbs{\alpha}=\{\a_Q\}_{Q\in {\mathscr D}}$,
define the following
two objects associated with $\mbs{\a}$:
\begin{align*}
  S_q(\mbs{\a})&:=\Big(\sum_{Q\in {\mathscr D}}\a_Q^q\chi_Q\Big)^{1/q}\qquad q \in (0,\infty),\\
M({\mbs{\a}})&:=\sup_{Q\in {\mathscr D}}\a_Q\chi_Q.
\end{align*}

\begin{cor}\label{goodlam}
Let $q \in (0,\infty)$. Suppose that there exist $C>0$ and $0<\d\le q$ such that for every cube $Q\in {\mathscr D}$,
\begin{equation}\label{dqsmall}
\sum_{R\in {\mathcal D}(Q)}\a_R^{\d}|R|\le C\a_Q^{\d}|Q|.
\end{equation}
Then there exists $K=K(q,\d,C)$ such that for all $\la>0$ and $0<\e<1$,
\begin{equation}\label{goodsq}
\begin{aligned}
\absb{\{x \in \R^n:S_q(\mbs{\a})(x)>&2\la, M(\mbs{\a})(x)\le \e\la\}}
\\&\le 2e^{-K/\e^q}\absb{\{x\in \R^n:S_q(\mbs{\a})(x)>\la\}}.
\end{aligned}
\end{equation}
\end{cor}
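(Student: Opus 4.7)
My plan is to combine a Calder\'on--Zygmund-type dyadic decomposition of $E_\lambda := \{S_q(\mbs{\alpha}) > \lambda\}$ with Theorem \ref{sumssparse} applied to the sequence $\{\alpha_R^q\}$, and then convert $\tfrac12$-sparseness into exponential decay through Proposition \ref{distr}. This is morally the same strategy as the proof of Theorem \ref{theorem:exp}, but dyadic rather than Whitney. For each $x \in E_\lambda$ the cubes of $\mathscr{D}$ containing $x$ form a chain, and $\sum_{R\supseteq Q}\alpha_R^q$ is nonincreasing in $Q$ along this chain; assuming $S_q(\mbs{\alpha}) < \infty$ a.e.\ (otherwise the estimate is vacuous), let $Q(x)$ be the largest $Q \in \mathscr{D}$ with $x \in Q$ and $\sum_{R\supseteq Q}\alpha_R^q > \lambda^q$. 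If $Q(x) \subsetneq Q(x')$ for two points, then $Q(x')$ is a proper ancestor of $Q(x)$ in the chain through $x$, forcing $\sum_{R\supseteq Q(x')}\alpha_R^q \leq \lambda^q$ by maximality at $x$, contradicting the defining property of $Q(x')$. Hence the distinct cubes $\{Q_j\}:=\{Q(x)\}_{x\in E_\lambda}$ are pairwise disjoint; they cover $E_\lambda$ (since $x \in Q(x)$), lie inside it (since $y \in Q_j$ gives $S_q(\mbs{\alpha})(y)^q \geq \sum_{R\supseteq Q_j}\alpha_R^q > \lambda^q$), and satisfy $L_j := \sum_{R\supsetneq Q_j}\alpha_R^q \leq \lambda^q$. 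Consequently, for $x \in Q_j$,
$$S_q(\mbs{\alpha})(x)^q = L_j + \sum_{R \in \mathcal{D}(Q_j)}\alpha_R^q\chi_R(x) \leq \lambda^q + \sum_{R \in \mathcal{D}(Q_j)}\alpha_R^q\chi_R(x).$$

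Fix $Q_j$. Hypothesis \eqref{dqsmall} says exactly that $\{\alpha_R^q\}_{R \in \mathcal{D}(Q_j)}$ satisfies the smallness condition \eqref{dsmall} with exponent $\delta/q \in (0,1]$, so Theorem \ref{sumssparse} yields a $\tfrac12$-sparse family $\mathcal{F}_j \subset \mathcal{D}(Q_j)$ and a constant $K_1 = K_1(q,\delta,C)$ with
$$\sum_{R \in \mathcal{D}(Q_j)}\alpha_R^q\chi_R(x) \leq K_1 \sum_{P\in\mathcal{F}_j}\alpha_P^q\chi_P(x), \qquad \text{a.e. } x \in Q_j.$$
Whenever $M(\mbs{\alpha})(x) \leq \varepsilon\lambda$, every $P$ with $x \in P$ satisfies $\alpha_P \leq M(\mbs{\alpha})(x) \leq \varepsilon\lambda$, so the right-hand side is bounded by $K_1(\varepsilon\lambda)^q \sum_{P\in\mathcal{F}_j}\chi_P(x)$. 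Combining with the previous bound, at every $x$ in the set $\{x \in Q_j : S_q(\mbs{\alpha}) > 2\lambda,\,M(\mbs{\alpha}) \leq \varepsilon\lambda\}$ we obtain
$$(2^q-1)\lambda^q < K_1(\varepsilon\lambda)^q \sum_{P\in\mathcal{F}_j}\chi_P(x), \qquad \text{i.e.,}\qquad \sum_{P\in\mathcal{F}_j}\chi_P(x) > \tfrac{2^q-1}{K_1\varepsilon^q}.$$

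Proposition \ref{distr} with $\eta = \tfrac12$ then delivers the exponential estimate
$$\absb{\{x \in Q_j : S_q(\mbs{\alpha}) > 2\lambda,\,M(\mbs{\alpha}) \leq \varepsilon\lambda\}} \leq 2\exp\has{-\tfrac{(\log 2)(2^q-1)}{K_1\varepsilon^q}}\abs{Q_j} = 2e^{-K/\varepsilon^q}\abs{Q_j}$$
with $K = K(q,\delta,C)$, and summing over the disjoint cubes $Q_j$ (whose union is $E_\lambda$) yields \eqref{goodsq}. The only delicate step is the dyadic decomposition of $E_\lambda$ into the $Q_j$, which relies solely on the chain property of $\{R \in \mathscr{D} : R \ni x\}$; everything else is a direct assembly of Theorem \ref{sumssparse} (together with the observation that \eqref{dqsmall} for $\{\alpha_R\}$ is literally \eqref{dsmall} for $\{\alpha_R^q\}$ at exponent $\delta/q$) with the sparse distributional bound of Proposition \ref{distr}.
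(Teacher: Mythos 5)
Your proof is correct and follows essentially the same route as the paper: decompose $\{S_q(\mbs{\a})>\la\}$ into maximal dyadic cubes $Q_j$ on which the tail sum over strict ancestors is at most $\la^q$, apply Theorem \ref{sumssparse} to $\{\a_R^q\}_{R\in\mc{D}(Q_j)}$ (noting \eqref{dqsmall} is \eqref{dsmall} at exponent $\d/q$), and convert sparseness to exponential decay via Proposition \ref{distr}. The only difference is cosmetic: the paper first reduces to a finite subfamily $F\subset\ms{D}$ so that maximal cubes trivially exist, whereas you run the stopping-time selection directly on the infinite lattice and dispose of the degenerate case (no stopping cube, hence $|\{S_q>\la\}|=\infty$) separately, which is equally valid.
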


\begin{proof}
By a standard limiting argument, it sufficed to prove (\ref{goodsq}) for $S^F_q(A)$ instead of $S_q(A)$,
where
$$S_q^F(\mbs{\a})=\Big(\sum_{Q\in F}\a_Q^q\chi_Q\Big)^{1/q}$$
for  an arbitrary finite family of cubes $F\subset {\mathscr D}$.

Write the set $\{x \in \R^n:S_q^F(\mbs{\a})(x)>\la\}$
as the union of its maximal cubes $Q_j\in F$. Then it suffices to prove that
\begin{equation}\label{locpart}
\absb{\{x\in Q_j:S_q^F(\mbs{\a})(x)>2\la, M(\mbs{\a})(x)\le \e\la\}}\le 2e^{-K/\e^q}|Q_j|.
\end{equation}

Denote the set on the left-hand side of (\ref{locpart}) by $E_j$ and fix $x\in E_j$. By maximality of $Q_j$,
$$\sum_{R\in F: Q_j\subset R}\a_R^q\chi_R(x)\le \la^q,$$
and hence
$$\sum_{R\in F:R\subseteq Q_j}\a_R^q\chi_R(x)=S_q^F(\mbs{\a})(x)^q-\sum_{R\in F, Q_j\subset R}\a_R^q\chi_R(x)>(2^q-1)\la^q.$$
On the other hand, applying Theorem \ref{sumssparse} to $\cbrace{\alpha_R^q}_{R \in \mc{D}(Q_j)}$, there exists a $\frac{1}{2}$-sparse family ${\mathcal F}\subset {\mathcal D}(Q_j)$ such that for a.e. $x\in E_j$
$$
\sum_{R\in F:R\subseteq Q_j}\a_R^q\chi_R(x)\lesssim \sum_{P\in {\mathcal F}}\a_P^q\chi_P(x)
\lesssim (\e\la)^q\sum_{P\in {\mathcal F}}\chi_P(x).
$$
So we have $\sum_{P\in {\mathcal F}}\chi_P(x) \gtrsim \frac{1}{\varepsilon^q}$ and therefore, by Proposition \ref{distr},
$$|E_j|\le \Big|\Big\{x\in Q_j:\sum_{P\in {\mathcal F}}\chi_P(x)\gtrsim \frac{1}{\e^q}\Big\}\Big|\le 2e^{-K/\e^q}|Q_j|,$$
i.e. \eqref{locpart} holds and the proof is complete.
\end{proof}

\begin{example}\label{dpcalc}
Let $\mu$ be a nonnegative Borel measure. Given $0<\ga<n$ and $q \in (0,\infty)$, define
the nonlinear dyadic potential by
\begin{align*}
  {\mathcal T}_{q,\gamma}(\mu)&:=\has{\sum_{Q\in {\mathscr D}}\Big(\frac{\mu(Q)}{|Q|^{1-\ga/n}}\Big)^q\chi_Q}^{1/q}\intertext{
Define also the fractional maximal operator by}
M_{\ga}(\mu)&:=\sup_{Q\in {\mathscr D}}\frac{\mu(Q)}{|Q|^{1-\ga/n}}\chi_Q.
\end{align*}

In \cite{HJ12}, Honz\'ik and Jaye established the following good-$\lambda$ inequality: there exists $C_1,C_2>0$ such that for all $\la>0$ and $0<\e<1$,
\begin{equation}\label{hj}
\begin{aligned}
\absb{\{x \in \R^n:{\mathcal T}_q(\mu)(x)&>2\la, M_{\ga}(\mu)(x)\le \e\la\}}
\\&\le C_1e^{-C_2/\e^q}\absb{\{x \in \R^n:{\mathcal T}_q(\mu)(x)>\la\}}.
\end{aligned}
\end{equation}

Let us show that this result can be deduced from Corollary \ref{goodlam}. Indeed, set $\a_Q=\frac{\mu(Q)}{|Q|^{1-\ga/n}}$ for $Q \in \ms{D}$.
It suffices to show that (\ref{dqsmall}) holds for $\d=\min(q,1)$.
Write ${\mathcal D}(Q)=\cup_{k=0}^{\infty}{\mathcal D}_k$, where ${\mathcal D}_k$ is the $k$th generation of dyadic subcubes of $Q$. First suppose that $q\ge 1$. Then $\d=1$ and we have
\begin{align*}
\sum_{R\in {\mathcal D}(Q)}\a_R^{\delta}|R|=\sum_{R\in {\mathcal D}(Q)}\mu(R)|R|^{\ga/n}&=|Q|^{\ga/n}\sum_{k=0}^{\infty}2^{-k\ga}\sum_{P\in {\mathcal D}_k}\mu(P)\\
&=C_{\ga}\cdot \mu(Q)|Q|^{\ga/n}=C_{\ga}\cdot \a_Q^{\delta}|Q|.
\end{align*}
Now suppose that $q<1$. Since $\#\{Q\in {\mathcal D}_k\}=2^{kn}$, we have by H\"older's inequality
$$\sum_{P\in {\mathcal D}_k}\mu(P)^q\le 2^{nk(1-q)}\mu(Q)^q.$$
Hence, as $\d=q$,
\begin{align*}
\sum_{R\in {\mathcal D}(Q)}\a_R^{\d}|R|&=|Q|^{1-q(1-\ga/n)}\sum_{k=0}^{\infty}2^{nk((1-\ga/n)q-1)}\sum_{P\in {\mathcal D}_k}\mu(P)^q\\
&\le \Big(\frac{\mu(Q)}{|Q|^{1-\ga/n}}\Big)^q|Q|\sum_{k=0}^{\infty}2^{nk((1-\ga/n)q-1)}2^{nk(1-q)}\\
&= \a_Q^q|Q|\sum_{k=0}^{\infty}2^{-k\ga q}=C_{\ga,q}\cdot \a_Q^\delta|Q|.
\end{align*}
Thus, we have verified (\ref{dqsmall}) and therefore (\ref{hj}) holds.
\end{example}

\end{document}